\title{Blocks whose defect groups are Suzuki $2$-groups}
\author{Charles W. Eaton\footnote{Department of Mathematics, University of Manchester, Manchester M13 9PL. Email: charles.eaton@manchester.ac.uk}}
\date{13th September 2024}
\newtheorem{theorem}{Theorem}[section]
\newtheorem{lemma}[theorem]{Lemma}
\newtheorem{corollary}[theorem]{Corollary}
\newtheorem{proposition}[theorem]{Proposition}
\theoremstyle{definition}
\newtheorem{remark}[theorem]{Remark}
\newtheorem{step}{Step}
\newcommand{\Syl}{\mathop{\rm Syl}\nolimits}
\newcommand{\Irr}{\mathop{\rm Irr}\nolimits}
\newcommand{\Aut}{\mathop{\rm Aut}\nolimits}
\newcommand{\Out}{\mathop{\rm Out}\nolimits}
\newcommand{\im}{\mathop{\rm Im}\nolimits}
\newcommand{\NN} {\mathbb{N}}
\newcommand{\FF} {\mathbb{F}}
\newcommand{\cA} {\mathcal{A}}
\newcommand{\cO} {\mathcal{O}}
\newcommand{\cB} {\mathcal{B}}
\newcommand{\cC} {\mathcal{C}}
\newcommand{\cD} {\mathcal{D}}
\def\a{\alpha}
\def\b{\beta}
\def\g{\gamma}
\def\bigcp{\mathop{\mathchoice 
 {\hbox{\sf\Large\lower 0.1\baselineskip\hbox{Y}}}%
 {\hbox{\sf\large\lower 0.1\baselineskip\hbox{Y}}}%
 {\hbox{\sf\normalsize\lower 0.1\baselineskip\hbox{Y}}}%
 {\hbox{\sf\tiny\lower 0.1\baselineskip\hbox{Y}}}%
}}
\def\bigtimes{\mathop{\mathchoice 
 {\hbox{\sf\Large\lower 0.1\baselineskip\hbox{X}}}%
 {\hbox{\sf\large\lower 0.1\baselineskip\hbox{X}}}%
 {\hbox{\sf\normalsize\lower 0.1\baselineskip\hbox{X}}}%
 {\hbox{\sf\tiny\lower 0.1\baselineskip\hbox{X}}}%
}}
\def\Sym(#1){\mathop{\rm Sym}(#1)}
\def\Sym(#1){S_{#1}}
\def\diag(#1){\mathop{\rm diag}(#1)}
\newenvironment{enumerate*}{%
 \begin{enumerate}%
 }%
 {\end{enumerate}}
\begin{document}


\maketitle


\begin{abstract}
We classify up to Morita equivalence all blocks whose defect groups are Suzuki $2$-groups. The classification holds for blocks over a suitable discrete valuation ring as well as for those over an algebraically closed field, and in fact holds up to basic Morita equivalence. As a consequence Donovan's conjecture holds for Suzuki $2$-groups. A corollary of the proof is that Suzuki Sylow $2$-subgroups of finite groups with no nontrivial odd order normal subgroup are trivial intersection.

\medskip

Keywords: Morita equivalence; finite groups; block theory; Suzuki groups; Donovan's conjecture
\end{abstract}


\section{Introduction}

Let $p$ be a prime and $(K,\cO,k)$ be a modular system with $k$ an algebraically closed field of characteristic $p$. Donovan's conjecture, which may be stated over $\cO$ or $k$, predicts that for a given finite $p$-group $P$, there are only finitely many Morita equivalence classes of blocks of finite groups with defect groups isomorphic to $P$. Further, we may ask for classifications of Morita equivalence classes of blocks with a given defect group. Most progress so far has been for tame blocks and for abelian defect groups, and there are relatively few other classes of nonabelian $p$-groups for which the conjecture or a classification is known, aside from those admitting only nilpotent blocks. See~\cite{el23} for a recent summary, and also~\cite{wiki} where progress is recorded.  

Following~\cite{hi63} a Suzuki $2$-group is a non-abelian $2$-group $P$ with more than one involution for which there is $\varphi \in \Aut(P)$ permuting the involutions in $P$ transitively. It is shown in~\cite{hi63} that $\Omega_1(P)=Z(P)=\Phi(P)=[P,P]$, so that $P$ has exponent $4$, and a characterisation of these groups is given, placing them in four infinite series labelled $\cA$ to $\cD$. Suzuki $2$-groups of type $\cA$ have order $|Z(P)|^2$ and the others have order $|Z(P)|^3$. The Suzuki $2$-groups include the Sylow $2$-subgroups of the Suzuki nonabelian simple groups and of $PSU_3(2^n)$, which we note feature as the only examples of nonabelian simple groups with nonabelian trivial intersection Sylow $2$-subgroups (see~\cite{su64}).

Here we determine the Morita equivalence classes of blocks whose defect groups are Suzuki $2$-groups. An important part of this is the observation that blocks with such defect groups are controlled, so that the classification by An in~\cite{an20} of controlled $2$-blocks of quasisimple groups may be applied. 

Recall for the following that a Morita equivalence is \emph{basic} if it is induced by an endopermutation source bimodule (see~\cite{pu99}). Our main result is as follows:

\begin{theorem}
\thlabel{main_theorem}
Let $G$ be a finite group and $B$ be a block of $\cO G$ with defect group $P$ a Suzuki $2$-group. Then $B$ is basic Morita equivalent to one of the following:
\begin{enumerate}[(i)]
\item a block of $P \rtimes \hat{E}$ where $E$ is an odd order subgroup of $\Aut(P)$ and $\hat{E}$ is a central extension of $E$ by $Z$ with $Z \leq [\hat{E},\hat{E}]$ acting trivally on $P$;
\item the principal block of $H$ for $^2B_2(2^{2n+1}) \leq H \leq \Aut({}^2B_2(2^{2n+1}))$ for some $n \geq 1$;
\item a block of maximal defect of $H$ where $Z(H) \leq [H,H]$ and $PSU_3(2^n) \leq H/Z(H) \leq \Aut(PSU_3(2^n))$ for some $n \geq 2$ with $[H/Z(H):PSU_3(2^n)]$ odd.
\end{enumerate}

Further, the Morita equivalence preserves the isomorphism type of the defect group, the Frobenius category and the K\"ulshammer-Puig class (see Section \ref{Controlled}). 
\end{theorem}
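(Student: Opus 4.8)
The plan is to combine a fusion-theoretic observation with the standard reduction machinery for classifications of this kind and with An's classification~\cite{an20} of controlled $2$-blocks of quasisimple groups. The first step is to show that every saturated fusion system $\cF$ on a Suzuki $2$-group $P$ is controlled, i.e.\ has no $\cF$-essential subgroups; in particular the fusion system of $B$ is $\cF_P(P\rtimes E)$ with $E=\Out_{\cF}(P)$ of odd order, as is automatic for a saturated fusion system. The input here is the structural information from~\cite{hi63}: $\Omega_1(P)=Z(P)=\Phi(P)=[P,P]$ and $\exp(P)=4$. If $Q<P$ were $\cF$-essential then $Q$ is $\cF$-centric, so $Z(P)\le C_P(Q)\le Q$, and $\Out_{\cF}(Q)$ has a strongly $2$-embedded subgroup; using that $Q$ then contains $\Phi(P)=Z(P)$ together with the explicit description of $\Aut(P)$ in~\cite{hi63}, this possibility is eliminated --- this is essentially the same computation that yields the trivial intersection corollary.

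The second step is the reduction to a small ``origin'' group. Applying the usual Fong--Reynolds and Fong reductions over $\cO$, together with Clifford theory for normal subgroups (as in the survey~\cite{el23}), I would replace $B$ by a basic Morita equivalent block $C$, with defect group still a Suzuki $2$-group, of a group $H$ in which $O_{2'}(H)$ is central and $C$ covers an $H$-stable block of every normal subgroup of $H$. Inspecting $F^*(H)=O_{2'}(H)O_2(H)E(H)$ and using that Suzuki $2$-groups are directly indecomposable (from the classification in~\cite{hi63}) with large centre --- so that $P$ cannot embed in a product of more than one component in a way compatible with $C$ being of maximal defect --- I expect to arrive at two cases: either $P\trianglelefteq H$, or $H$ is built, up to these reductions, from a single quasisimple component whose Sylow $2$-subgroups are Suzuki $2$-groups.

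In the first case, Külshammer's theorem on blocks with normal defect group identifies $C$, up to basic Morita equivalence, with a twisted group algebra $\cO_\gamma[P\rtimes E]$ for $E\le\Out(P)$ of odd order and $\gamma\in H^2(E,\cO^\times)$; realising $\gamma$ by a central extension $\hat E$ of $E$ with kernel $Z\le[\hat E,\hat E]$ acting trivially on $P$ yields case~(i). In the second case the classification of finite simple groups lists the possible components as ${}^2B_2(2^{2n+1})$ and $PSU_3(2^n)$ (and $SU_3(2^n)$ and covers), and~\cite{an20} then determines which of their blocks are controlled with Suzuki defect group --- the principal block in the Suzuki case, a block of maximal defect in the unitary case. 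Tracking the residual group data --- $\Out({}^2B_2(2^{2n+1}))$ is cyclic of odd order $2n+1$; a central $2'$-section not contained in the derived subgroup splits off without changing the Morita class, so the surviving central subgroup may be taken inside $[H,H]$; and only extensions of odd index contribute --- then produces cases~(ii) and~(iii).

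Finally, every equivalence used above --- the Fong--Reynolds and Fong reductions, the Clifford-theoretic passage to covering groups, Külshammer's theorem, and the explicit equivalences underlying~\cite{an20} --- is basic, i.e.\ induced by an endopermutation source bimodule, and basic Morita equivalences preserve the isomorphism type of the defect group, the fusion system (Frobenius category) and the Külshammer--Puig class (cf.~\cite{pu99}); this gives the ``Further'' assertion. The main obstacles I anticipate are carrying out the second step cleanly --- ruling out configurations with several components or a nontrivial direct factor and keeping careful track of the central extensions coming from covers and from twisted group algebras --- and extracting from~\cite{an20} exactly which blocks of the quasisimple groups in question are controlled with Suzuki defect groups, together with their Morita equivalence classes, and matching these to the list; the fusion-system step, although pivotal, should be comparatively short.
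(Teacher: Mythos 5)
Your overall architecture coincides with the paper's: control of fusion on Suzuki $2$-groups, reduction to a quasiprimitive ``reduced'' block via Fong--Reynolds/K\"ulshammer--Puig, K\"ulshammer's theorem in the normal defect group case, and An's classification of controlled $2$-blocks of quasisimple groups for the remaining cases, with basic Morita equivalences carrying the defect group, fusion system and K\"ulshammer--Puig class. The first, third and fourth steps are fine (the control-of-fusion statement is in fact already available, since $\Omega_1(P)=Z(P)$ makes $P$ resistant).

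The genuine gap is your second step, which is where essentially all of the work in the paper lies (its \thref{main_reduction}). You assert that inspecting $F^*(H)$ ``should'' leave only $P\trianglelefteq H$ or a single component whose Sylow $2$-subgroups are Suzuki $2$-groups, but neither half of this dichotomy follows from direct indecomposability of $P$: the components enter as a central product meeting $P$ in a \emph{normal subgroup} of $P$, not as direct factors, and a priori the component $L$ may satisfy $P\cap L<P$ (for instance $P\cap L$ abelian of type $C_4\times(C_2)^s$, or $P\cap L\le Z(P)$) with $P/(P\cap L)$ acting by outer automorphisms on $L$; also the block of $L$ need not have maximal defect at this stage. Ruling out these configurations, and the multi-component ones, requires inputs you do not have in hand: a structure theorem for normal subgroups $Q\trianglelefteq P$ (either $Z(P)\le Q$ with $Z(Q)=Z(P)$, or $Q$ abelian of very restricted shape, proved via the $O_\alpha$-subgroups of Le--Magaard--Moori in type $\cA$ and semi-extraspecial structure in types $\cB,\cC,\cD$); resistance of those normal subgroups, so that the covered blocks of components are controlled and An's theorem applies to them (your appeal to~\cite{an20} only for components with Suzuki Sylow $2$-subgroups misses exactly the problematic case); $2$-rank bounds for $Z(S)$ and $\Out(S)$ of quasisimple groups; the nilpotency of type $\cA$ blocks with $Z(P)\le Z(G)$; and a bespoke argument for the $^2B_2(8)$-Sylow case excluding quotients $C_2\times C_2$. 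Without these the step ``I expect to arrive at two cases'' is not a proof, and the case $P\cap E(H)<P$ would simply be left open.
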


As an almost immediate consequence we have:

\begin{corollary}
\thlabel{Donovan}
Donovan's conjecture holds for Suzuki $2$-groups. In fact, there are only finitely many of basic Morita equivalence classes of blocks with defect group a given Suzuki $2$-group.
\end{corollary}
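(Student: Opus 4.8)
The plan is to deduce the corollary directly from \thref{main_theorem}. Since any finite group has only finitely many blocks, it suffices to show that, for a fixed Suzuki $2$-group $P$, only finitely many finite groups occur in families (i)--(iii) of \thref{main_theorem} while carrying a block with defect group isomorphic to $P$. As every block with defect group $P$ is, by that theorem, basic Morita equivalent to one of these blocks, this bounds the number of basic Morita equivalence classes, and a fortiori the number of Morita equivalence classes, so Donovan's conjecture follows.

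For family (i): $\Aut(P)$ is finite, so there are only finitely many choices for the odd-order subgroup $E$. Given $E$, the hypotheses make $1 \to Z \to \hat{E} \to E \to 1$ a stem extension ($Z$ is central and $Z \leq [\hat{E},\hat{E}]$), and for a stem extension there is an epimorphism from the Schur multiplier $M(E)$ onto $Z$; since $E$ is finite, $M(E)$ is finite, forcing $Z$ and hence $\hat{E}$ to be finite with $|Z| \leq |M(E)|$. Thus only finitely many isomorphism types of $\hat{E}$ occur, only finitely many groups $P \rtimes \hat{E}$, and so only finitely many blocks.

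For family (ii): the defect group of the principal block of $H$ is a Sylow $2$-subgroup of $H$, which (as $[\Aut({}^2B_2(2^{2n+1})) : {}^2B_2(2^{2n+1})] = 2n+1$ is odd) coincides with a Sylow $2$-subgroup of $S := {}^2B_2(2^{2n+1})$, of order $2^{4n+2}$; requiring this to be isomorphic to $P$ forces $|P| = 2^{4n+2}$ and so determines $n$, after which $H$ ranges over the finitely many groups with $S \leq H \leq \Aut(S)$. Family (iii) is analogous: writing $\bar{H} = H/Z(H)$, one has $|H|_2 = |Z(H)|_2\,|\bar{H}|_2 = |Z(H)|_2\, 2^{3n}$, the last equality because $[\bar{H} : PSU_3(2^n)]$ is odd; so a block of $H$ of maximal defect with defect group isomorphic to $P$ forces $2^{3n} \mid |P|$, bounding $n$. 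For each such $n$ there are finitely many admissible $\bar{H}$, each with finite Schur multiplier, and $Z(H) \leq [H,H]$ again makes $H$ a stem extension of $\bar{H}$, bounding $|Z(H)|$; hence finitely many $H$. In each case this leaves finitely many blocks, completing the argument.

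The only point needing care is the finiteness of the central kernels $Z$ and $Z(H)$: a central extension of a finite group need not be finite, and it is precisely the hypotheses $Z \leq [\hat{E},\hat{E}]$ and $Z(H) \leq [H,H]$ that exhibit these as quotients of stem covers and bound the kernels by the relevant (finite) Schur multipliers. Finally, the conclusion holds over $\cO$ as well as over $k$ because both \thref{main_theorem} and the notion of basic Morita equivalence are stated over $\cO$.
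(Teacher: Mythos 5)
Your proof is correct and is essentially the deduction the paper intends: the paper offers no explicit argument, calling the corollary an almost immediate consequence of \thref{main_theorem}, and your verification that each family (i)--(iii) contributes only finitely many blocks for a fixed $P$ is exactly the implicit content. Your care over the finiteness of the central kernels (via the stem-extension conditions $Z \leq [\hat{E},\hat{E}]$ and $Z(H) \leq [H,H]$ bounding them by Schur multipliers) and over bounding $n$ by the order of the defect group is a worthwhile filling-in of details the paper leaves unstated.
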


The proof of \thref{main_theorem} involves a detailed analysis of the structure of groups admitting a block whose defect groups are Suzuki $2$-groups, and this analysis, together with the results of~\cite{an20}, gives us that Suzuki Sylow $2$-subgroups are always trivial intersection for groups with no nontrivial normal subgroup of odd order.

The structure of the paper is as follows. In Section \ref{Suzuki} we recall the definition of the Suzuki $2$-groups and give some properties that will be useful later. In Section \ref{Controlled} we recall controlled blocks and inertial quotients, and apply the relevant results of~\cite{an20}. Section \ref{Proofs} contains the proof of the main result. This mainly consists of the description of the structure of what we will call reduced blocks with Suzuki $2$-groups as their defect groups. We also give the full classification of blocks with Suzuki $2$-groups of order $64$ to illustrate our main result. In Section \ref{TI} we apply the description of reduced blocks from Section \ref{Proofs} to deduce the result on the trivial intersection of Suzuki Sylow $2$-subgroups. Finally, in Section \ref{irreducibles} we gather some observations on invariants of blocks whose defect groups are Suzuki $2$-groups using \thref{main_theorem}, known results on irreducible characters of Suzuki $2$-groups and blocks with trivial intersection defect groups.


\section{Suzuki $2$-groups}
\label{Suzuki}

We recall here the description of the classes of Suzuki $2$-groups and give some useful properties.

Write $q=2^m$ for $m \in \NN$. Let $\theta$ be a field automorphism of $\FF_q$ and define $\FF_{\theta} = \{x \in \FF_q:\theta(x)=x\}$. Note $\theta$ is given by $\theta(x)=x^{2^l}$ for some $l$.

Following~\cite{hi63}, the series of Suzuki $2$-groups are as follows. Note that distinct $\theta$ and $\epsilon$ may and do sometimes give isomorphic groups, and that recognising isomorphism is a nontrivial problem.  

\medskip

{\bf Type $\cA$:} When $m$ is not a power of $2$, there exist nontrivial automorphisms $\theta$ of odd order of $\FF_q$. Define $\cA(m,\theta)$ to consist of pairs $(\a,\b)$ where $\a, \b \in \FF_q$, with multiplication given by $$(\a_1,\b_1)(\a_2,\b_2)=(\a_1+\a_2,\b_1+\b_2+\a_1\theta(\a_2)).$$ We have $Z(P)=\{(0,\b):\b \in \FF_q\}$. Note that the Sylow $2$-subgroups of the Suzuki simple group $^2B_2(2^{2t+1})$ are of type $\cA(2t+1,\theta)$ with $\theta(x)=x^{2^{t+1}}$, so that $\theta$ has order $m=2t+1$.

\medskip

{\bf Type $\cB$:} Let $m \geq 2$, $\theta$ be any automorphism of $\FF_q$, and $\epsilon \in \FF_q$ such that there is no $\rho \in \FF_q$ with $\epsilon=\rho^{-1}+\theta(\rho)$. Define $\cB(m,\theta,\epsilon)$ to consist of triples $(\a,\b,\g)$ where $\a,\b,\g \in \FF_q$, with multiplication given by 
$$(\a_1,\b_1,\g_1)(\a_2,\b_2,\g_2)=(\a_1+\a_2,\b_1+\b_2,\g_1+\g_2+\a_1\theta(\a_2)+\epsilon \a_1 \theta(\b_2)+\b_1 \theta(\b_2)).$$ We have $Z(P)=\{(0,0,\g):\g \in \FF_q\}$. Note that the Sylow $2$-subgroups of the groups $PSU_3(2^m)$ are Suzuki groups of type $\cB$.

\medskip

{\bf Type $\cC$:} Let $m \geq 3$ be odd, $\theta$ be the unique automorphism satisfying $2 \theta^2=1$, and $\epsilon \in \FF_q$ such that there is no $\rho \in \FF_q$ with $\epsilon=\rho^{-1}+\rho\theta(\rho^2)$. Define $\cC(m,\theta,\epsilon)$ to consist of triples $(\a,\b,\g)$ where $\a,\b,\g \in \FF_q$, with multiplication given by
$$(\a_1,\b_1,\g_1)(\a_2,\b_2,\g_2)=(\a_1+\a_2,\b_1+\b_2,\g_1+\g_2+\a_1\theta(a_2)+\epsilon \a_1^{1/2} \theta(\b_2^2)+\b_1\b_2).$$ We have $Z(P)=\{(0,0,\g):\g \in \FF_q\}$. 

\medskip

{\bf Type $\cD$:} Let $m \geq 5$ be divisible by $5$, $\theta$ be an automorphism of $\FF_q$ of order $5$, and $\epsilon \in \FF_q$ such that there is no $\rho \in \FF_q$ with $\epsilon=\rho^{-1}+\rho\theta^4(\rho)\theta(\rho)$. Define $\cD(m,\theta,\epsilon)$ to consist of triples $(\a,\b,\g)$ where $\a,\b,\g \in \FF_q$, with multiplication given by
$$(\a_1,\b_1,\g_1)(\a_2,\b_2,\g_2)=(\a_1+\a_2,\b_1+\b_2,\g_1+\g_2+\a_1\theta(a_2)+\epsilon \theta^3(\a_1) \theta(\b_2)+\b_1\theta^2(\b_2)).$$ We have $Z(P)=\{(0,0,\g):\g \in \FF_q\}$.

\bigskip

A search using the SmallGroups library~\cite{gap_SG} tells us the following:

\begin{lemma}
\thlabel{small_cases}
The Suzuki $2$-groups of order $2^6$ are the Sylow $2$-subgroups of $^2B_2(8)$ and $PSU_3(4)$.
\end{lemma}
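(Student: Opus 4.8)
The plan is to verify this by a direct computation in the \texttt{SmallGroups} library of GAP, exactly as the phrasing ``A search using the SmallGroups library'' suggests. First I would enumerate all groups of order $2^6 = 64$ (there are $267$ of them) and, for each, test the defining property of a Suzuki $2$-group from~\cite{hi63}: the group $P$ must be non-abelian, have more than one involution, and possess an automorphism $\varphi \in \Aut(P)$ that acts transitively on the set of involutions of $P$. In GAP this is a routine filter: compute \texttt{AutomorphismGroup}, let it act on the set $\{x \in P : x^2 = 1,\ x \neq 1\}$, and check whether that action is transitive.

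Once the list of order-$64$ Suzuki $2$-groups is obtained, the second step is to identify each surviving group. From the general theory recalled in Section~\ref{Suzuki}, a Suzuki $2$-group of type $\cA$ has order $|Z(P)|^2$ and types $\cB$, $\cC$, $\cD$ have order $|Z(P)|^3$; moreover types $\cC$ and $\cD$ require $m \geq 3$ with extra divisibility constraints ($5 \mid m$ for $\cD$), which rule them out at order $64$. So at order $64$ one expects only $\cA(3,\theta)$ with $\theta$ of order $3$ (giving $|Z(P)| = 8$, $|P| = 64$) and $\cB(2,\theta,\epsilon)$ with $|Z(P)| = 4$, $|P| = 64$. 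The former is the Sylow $2$-subgroup of $^2B_2(8)$ (as noted explicitly in the Type $\cA$ paragraph, with $t = 1$), and the latter is the Sylow $2$-subgroup of $PSU_3(4)$ (as noted in the Type $\cB$ paragraph, with $m = 2$). To pin down the identification concretely one computes a Sylow $2$-subgroup of $^2B_2(8)$ and of $PSU_3(4)$ in GAP, applies \texttt{IdSmallGroup}, and checks that exactly these two \texttt{SmallGroup} IDs appear among the groups passing the transitivity test — and that no others do.

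The main obstacle, such as it is, is not mathematical depth but making sure the search is exhaustive and that the isomorphism identifications are correct rather than merely plausible: one must confirm there is genuinely no third order-$64$ group with a vertex-transitive (on involutions) automorphism, and that the two groups found are pairwise non-isomorphic and match the claimed Sylow subgroups. Both points are settled unambiguously by \texttt{IdSmallGroup}. I would therefore present the proof as: run the transitivity filter over \texttt{AllSmallGroups(64)}; observe exactly two groups survive; compute \texttt{IdSmallGroup} of a Sylow $2$-subgroup of $^2B_2(8)$ and of $PSU_3(4)$; note these match the two survivors, and that the order-count argument above independently confirms no Suzuki $2$-group of type $\cC$ or $\cD$ can have order $64$ while type $\cA$ and $\cB$ each contribute exactly one. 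This completes the verification.
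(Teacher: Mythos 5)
Your proposal is correct and is essentially the paper's own argument: the lemma is justified there by exactly such a search over the SmallGroups library, with the survivors identified with the Sylow $2$-subgroups of $^2B_2(8)$ and $PSU_3(4)$. Your additional cross-check via Higman's classification (only types $\cA(3,\theta)$ and $\cB(2,\theta,\epsilon)$ can have order $64$) is a harmless supplement, with the exhaustive computation carrying the proof just as in the paper.
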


A feature of Suzuki $2$-groups that helps place restrictions on the structure of groups having a block with these defect groups is the following restriction on the nature of their normal subgroups, mostly based on~\cite{lmm15}.

\begin{lemma}
\thlabel{subgroup_structure}
Let $P$ be a Suzuki $2$-group and $Q \lhd P$.
\begin{enumerate}[(a)]
\item Suppose $P$ is of type $\cA (m,\theta)$, where $\theta$ has (odd) order $k$ and $m=nk$. 
\begin{enumerate}[(i)]
\item If $|QZ(P)/Z(P)| \geq 2$, then $|Z(P) \cap Q| \geq 2^{n(k-1)}$.
\item If $|QZ(P)/Z(P)| \geq 4$, then $Z(P) \leq Q$ and $Z(Q)=Z(P)$.
\item If $|QZ(P)/Z(P)| \leq 2$, then $Q$ is abelian. 
\item If $|Z(P)|=8$ and $|QZ(P)/Z(P)| = 2$, then $Z(P) \leq Q$. 
\end{enumerate}
\item Suppose $P$ has type $\cB$, $\cC$ or $\cD$. If $|QZ(P)/Z(P)| \geq 2$, then $Z(P) \leq Q$ and $Z(Q)=Z(P)$.
\end{enumerate}
\end{lemma}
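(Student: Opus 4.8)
The strategy is to work explicitly with the coordinates given in Section~\ref{Suzuki}, using the facts from~\cite{hi63} that $\Omega_1(P) = Z(P) = \Phi(P) = [P,P]$ with $P/Z(P)$ elementary abelian. Since $Q \lhd P$, the image $\bar Q = QZ(P)/Z(P)$ is a subgroup of the $\FF_2$-space $P/Z(P)$ that is invariant under the action of $P$ by conjugation; equivalently, $\bar Q$ is stable under the maps $\bar x \mapsto \overline{[x,p]}$ for all $p \in P$. The plan is to compute these commutator maps in each series and extract the stated consequences.

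For type $\cA(m,\theta)$: here $P/Z(P) \cong \FF_q$ via $(\a,\b) \mapsto \a$, and $[(\a_1,\b_1),(\a_2,\b_2)] = (0, \a_1\theta(\a_2) + \a_2\theta(\a_1))$. So if $\bar Q$ contains a nonzero element $\bar\a_1$, then $Z(P) \cap Q$ contains all elements $(0, \a_1\theta(\a_2)+\a_2\theta(\a_1))$ as $\a_2$ ranges over $\FF_q$. Identifying $Z(P)$ with $\FF_q$, this is the image of the $\FF_{2^k}$-linear map $\a_2 \mapsto \a_1\theta(\a_2) + \a_2\theta(\a_1)$, where $\FF_{2^k}$ is the fixed field of $\theta$ (so $[\FF_q:\FF_{2^k}] = n$, as $m = nk$). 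This map is $\FF_{2^k}$-linear in $\a_2$, so its image is an $\FF_{2^k}$-subspace of $\FF_q$, hence has order $2^{k \cdot d}$ for some $d$. One checks the kernel is $\{\a_2 : \a_1\theta(\a_2) = \a_2\theta(\a_1)\}$, i.e. $\a_2/\a_1 \in \FF_{2^k}$ after dividing (valid since $q$ is a $2$-power, using $\theta(\a_2)/\theta(\a_1) = \theta(\a_2/\a_1)$), so the kernel has order $2^k$ and the image has order $2^{k(n-1)} = 2^{n(k-1)}$ — wait, one must be careful: $2^{m-k} = 2^{nk - k} = 2^{k(n-1)}$, and $n(k-1) = nk - n$, so these agree only when $k = n$; the correct bound to prove is $|Z(P)\cap Q| \geq 2^{nk-k}$ obtained from the linear algebra, and one must reconcile the exponent in the statement — I expect the intended reading is that taking a second independent generator of $\bar Q$ (available once $|\bar Q| \geq 4$) and summing the two images forces $Z(P)\cap Q$ all the way up to $Z(P)$, giving (ii); while (i) follows from a single generator; and (iii) is the contrapositive: if $Q$ is nonabelian then $[Q,Q] \neq 1$ forces $|\bar Q| \geq 2$ and in fact $\geq 4$ by the structure of the commutator form. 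Part (iv) is the small case $|Z(P)| = 8$, i.e. $m = 3$, $k = 3$, $n = 1$: here the image of a single nonzero $\a_1$ already has order $2^{k(n-1)} = 2^0 = 1$... so one instead argues directly, using that $\theta(x) = x^{2^{t+1}}$ for the relevant automorphism and checking by hand (or via Lemma~\ref{small_cases}) that the trace-like form $\a_1\theta(\a_2) + \a_2\theta(\a_1)$ on $\FF_8$ has image all of $\FF_8$ whenever $\a_1 \neq 0$.

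For types $\cB$, $\cC$, $\cD$: in each case $P/Z(P) \cong \FF_q \times \FF_q$ via $(\a,\b,\g)\mapsto (\a,\b)$, and one computes $[(\a_1,\b_1,\g_1),(\a_2,\b_2,\g_2)]$ from the displayed multiplication; the result is $(0,0, f((\a_1,\b_1),(\a_2,\b_2)))$ for an explicit alternating biadditive form $f$. Given $(\a_1,\b_1)$ representing a nonzero element of $\bar Q$, the set $\{f((\a_1,\b_1),(\a_2,\b_2)) : (\a_2,\b_2)\in\FF_q^2\}$ lies in $Z(P)\cap Q$; the claim $Z(P)\leq Q$ amounts to showing this set is all of $\FF_q$. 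Since $f$ is additive in the second argument, it suffices to show $f((\a_1,\b_1),-)$ is not identically zero, i.e. that $(\a_1,\b_1)$ is not in the radical of $f$. Here I would invoke~\cite{lmm15} (or verify directly): the defining non-existence condition on $\epsilon$ in each series is precisely what guarantees that $f$ is nondegenerate on $P/Z(P)$, so no nonzero element lies in its radical, giving $Z(P)\leq Q$; then $Z(Q) = Z(P)$ follows because $Z(Q) \supseteq Z(P)$ while $Z(Q)$ cannot be larger ($Q/Z(P)$ acts faithfully on... no — rather, any $x\in Z(Q)$ has $\bar x$ in the radical of $f$ restricted to $\bar Q$, and if $\bar x \neq 0$ we can extend to show $\bar x$ pairs nontrivially with some element of $\bar P$, contradicting $Q \lhd P$ together with nondegeneracy; so $\bar x = 0$, i.e. $x \in Z(P)$).

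\emph{Main obstacle.} The genuinely delicate point is type $\cA$: the three cases (i)--(iv) all hinge on understanding the image of the $\FF_{2^k}$-bilinear form $(\a_1,\a_2)\mapsto \a_1\theta(\a_2)+\a_2\theta(\a_1)$ on $\FF_q$, and in particular reconciling the exponent $n(k-1)$ in part~(i) with the naive linear-algebra count, as well as handling part~(iv) where that count degenerates and one must use the specific shape of $\theta$ (and possibly fall back on the explicit list in Lemma~\ref{small_cases}). For types $\cB$, $\cC$, $\cD$ the work is more mechanical: the substance is entirely in translating the "no $\rho$ with $\epsilon = \cdots$" hypotheses into nondegeneracy of the commutator form, which is exactly the content borrowed from~\cite{lmm15}, so there the main task is bookkeeping across the three slightly different multiplication formulas.
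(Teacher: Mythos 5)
Your type $\cA$ computation goes wrong at the Galois theory: if $\theta$ has order $k$, its fixed field has \emph{index} $k$ in $\FF_q$, so it is $\FF_{2^n}$ (not $\FF_{2^k}$), where $m=nk$. The kernel of $\tau_{\alpha}\colon x\mapsto \alpha\theta(x)+x\theta(\alpha)$ is then $\alpha\FF_{2^n}$, of order $2^n$, so the image has order $2^{m-n}=2^{n(k-1)}$ --- exactly the exponent in (i), so there is nothing to ``reconcile''; your alternative bound $2^{nk-k}$ is an artifact of swapping $n$ and $k$. The same mix-up sinks your route to (iv): for $m=3$ we have $k=3$, $n=1$, so the image of $\tau_{\alpha}$ has order $2^{n(k-1)}=4$, a \emph{proper} subgroup of $Z(P)\cong C_2\times C_2\times C_2$; your proposed hand-check that the form is surjective onto $\FF_8$ for every $\alpha\neq 0$ is simply false, so (iv) cannot be obtained from the commutator image alone. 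The paper instead invokes \thref{small_cases}: $P$ is the Sylow $2$-subgroup of ${}^2B_2(8)$, and one checks directly that it has no normal subgroups isomorphic to $C_4$ or $C_4\times C_2$ (which, by (i) and (iii), is what a counterexample to (iv) would have to be). Also, in (ii) you assert without argument that the images coming from two independent generators sum to all of $Z(P)$; this is true but is a genuine fact, cited in the paper as \cite[Proposition 2.4]{lmm15} ($O_{\alpha_1}O_{\alpha_2}=Z(P)$ for distinct nonzero $\alpha_1,\alpha_2$), not a formality. Your argument for (iii) is essentially the paper's and is fine.

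For types $\cB$, $\cC$, $\cD$ the reduction you make is logically insufficient: knowing that $f((\alpha_1,\beta_1),-)$ is not identically zero only places a nontrivial subgroup of $Z(P)$ inside $Q$, whereas $Z(P)\leq Q$ requires the image of this map --- which is only additive, not $\FF_q$-linear --- to be all of $\FF_q$, i.e.\ $[x,P]=Z(P)$ for every noncentral $x$. That surjectivity is strictly stronger than nondegeneracy of $f$ and is precisely the semi-extraspecial property; the paper obtains it from \cite{be77} (these groups are ultraspecial) and then quotes \cite[Theorem A]{fm01} for all of part (b), including $Z(Q)=Z(P)$. Your sketch of $Z(Q)=Z(P)$ does not work as stated: an element $x\in Z(Q)\setminus Z(P)$ pairing nontrivially with some element of $P$ is no contradiction (it merely restates $x\notin Z(P)$); one must produce an element of $Q$ itself failing to commute with $x$, which needs a further argument. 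Note also that \cite{lmm15} treats types $\cA$ and $\cC$ only, so it cannot serve as the source of nondegeneracy for types $\cB$ and $\cD$. In summary, the type $\cA$ linear algebra is repairable once the fixed field is corrected, but parts (iv) and (b) need different arguments --- in the paper, an explicit check on the unique order-$64$ group of type $\cA$, and the ultraspecial/semi-extraspecial results of \cite{be77} and \cite{fm01}.
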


\begin{proof}
(a) Write $q=2^m$. Let $(\a,\b) \in Q$. For all $x \in \FF_q$, 
$$[(\a,\b),(x,0)]=(0,\a \theta(x)+x \theta(\alpha)) \in Q.$$
Note that $\tau_\a:\FF_q \rightarrow \FF_q$ given by $\tau_\a(x)=\a \theta(x)+x \theta(\alpha)$ is a group homomorphism. By the discussion following~\cite[Remark 2.2]{lmm15}, the subgroup $O_\alpha:=\{(0,\beta):\beta \in \im(\tau_\a)\}=[(\alpha,x),P] \leq Z(P)$ for all $x \in \FF_q$, and has order $2^{n(k-1)}$. Hence if the normal subgroup $Q$ of $P$ possesses an element $(\a,\b)$ outside of $Z(P)$, then $O_\a \leq Q$. The first part in now immediate. 

By~\cite[Proposition 2.4]{lmm15} if $\a_1 \neq \a_2$, both nonzero, then $O_{\a_1}O_{\a_2}=Z(P)$. Hence if $|QZ(P)/Z(P)| \geq 4$, then since there are elements $(\a_1,\b_1), (\a_2,\b_2) \in Q \setminus Z(P)$ with $\a_1 \neq \a_2$, we must have $Z(P)=[Q,P] \leq Q$.

If $|QZ(P)/Z(P)| = 2$, then there is $\alpha \in \FF_q^\times$ such that $Q \leq \langle (\alpha,0) \rangle Z(P)$ and so $Q$ is abelian since $[(\a,\b_1),(\a,\b_2)]=(0,0)$ for all $\b_1,\b_2\in\FF_q$.

Finally suppose that $m=3$ and $|QZ(P)/Z(P)| = 2$. By \thref{small_cases} there is just one possibility for $P$, a Sylow $2$ subgroup of $^2B_2(8)$. The subgroup structure of $P$ may then easily be determined, to show that there are no normal subgroups isomorphic to $C_4$ or $C_4 \times C_2$.

(b) By~\cite[Satz 2]{be77} $P$ is an ultraspecial $2$-group, and hence semi-extraspecial, meaning that for every maximal subgroup $N$ of $Z(P)$, the group $P/N$ is extraspecial. The result then follows by~\cite[Theorem A]{fm01} (or more explicitly~\cite[Corollary 8.3]{fm01}).
\end{proof}

\begin{proposition}
\thlabel{type_A_auts}
Let $P$ be a Suzuki $2$-group of type $\cA$ with $|Z(P)|=q=2^m$. Then $\Aut(P)/O_2(\Aut(P)) \cong C_{q-1} \rtimes C_m$, where a generator of $C_{q-1}$ corresponds to a Singer cycle and $C_m$ corresponds to field automorphisms of $\FF_q$. The automorphisms of $P$ act faithfully on $Z(P)$ and on $P/Z(P)$.
\end{proposition}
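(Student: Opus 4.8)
The plan is to study the two actions of $\Aut(P)$ on the characteristic sections $Z:=Z(P)$ and $P/Z$ — both elementary abelian of order $q$, since $Z(P)=\Phi(P)=[P,P]=\Omega_1(P)$ by the quoted result of~\cite{hi63} — and to identify $\Aut(P)$ modulo the kernel of these actions with $\Gamma L_1(q)\cong C_{q-1}\rtimes C_m$. Identify $Z$ with $\FF_q$ via the last coordinate and $P/Z$ with $\FF_q$ via the first. A direct computation shows that squaring induces the map $s\colon P/Z\to Z$, $s(\alpha)=\alpha\theta(\alpha)$, and commutation induces $(\alpha_1,\alpha_2)\mapsto\alpha_1\theta(\alpha_2)+\alpha_2\theta(\alpha_1)=s(\alpha_1+\alpha_2)+s(\alpha_1)+s(\alpha_2)$. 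Writing $\bar\phi$ and $\phi_0$ for the $\FF_2$-linear bijections of $\FF_q$ induced on $P/Z$ and $Z$ by $\phi\in\Aut(P)$, one has $\phi(\alpha,\beta)=(\bar\phi(\alpha),\phi_0(\beta)+L(\alpha))$ for some $L\colon\FF_q\to\FF_q$ with $L(0)=0$, and applying $\phi$ to squares forces $\phi_0\circ s=s\circ\bar\phi$.

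First I would identify the kernel $K$ of $\Aut(P)\to\Aut(Z)\times\Aut(P/Z)$. Such a $\phi$ has $\phi(g)=g\,\mu(g)$ for a homomorphism $\mu\colon P\to Z$, which kills $[P,P]=Z$ and hence factors through $P/Z$, so $\phi=\phi_L$ where $\phi_L(\alpha,\beta)=(\alpha,\beta+L(\alpha))$, $L\in\End_{\FF_2}(\FF_q)$; conversely each such $\phi_L$ is an automorphism. Thus $K$ is elementary abelian and normal in $\Aut(P)$, so $K\le O_2(\Aut(P))$. Because $\theta$ has odd order one checks $\gcd(1+2^l,2^m-1)=1$: a prime dividing both $2^l+1$ and $2^m-1$ would force the multiplicative order of $2$ to have $2$-adic valuation $v_2(l)+1$, whereas $\theta$ of odd order gives $v_2(m)\le v_2(l)$. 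Hence $s$ is a bijection of $\FF_q$ and $\phi_0=s\circ\bar\phi\circ s^{-1}$; so $\bar\phi=\mathrm{id}$, and separately $\phi_0=\mathrm{id}$, each force $\phi\in K$. Therefore $\Aut(P)/K\cong A:=\{\bar\phi\in\Aut(P/Z):\bar\phi\text{ extends to an automorphism of }P\}$, and $\Aut(P)/K$ acts faithfully on $P/Z$ (as $\bar\phi=\mathrm{id}\Rightarrow\phi\in K$) and on $Z$ (as $\bar\phi\mapsto\phi_0$ is injective).

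The crux is $A=\Gamma L_1(q)$, the group of maps $x\mapsto c\,\sigma(x)$ with $c\in\FF_q^\times$ and $\sigma$ a field automorphism. The inclusion $\Gamma L_1(q)\le A$ is a direct check: $(\alpha,\beta)\mapsto(c\,\sigma(\alpha),c^{1+2^l}\sigma(\beta))$ is an automorphism of $P$, using that $\sigma$ commutes with $\theta$ (the Galois group being cyclic). For the converse, one first shows that $\bar\phi$ extends to $P$ iff $\phi_0:=s\circ\bar\phi\circ s^{-1}$ is $\FF_2$-linear: necessity is above, and for sufficiency the map $\delta(\alpha_1,\alpha_2):=\bar\phi(\alpha_1)\theta(\bar\phi(\alpha_2))+\phi_0(\alpha_1\theta(\alpha_2))$ is $\FF_2$-bilinear and alternating (since $\delta(\alpha,\alpha)=s(\bar\phi(\alpha))+\phi_0(s(\alpha))=0$), hence equals $L(\alpha_1+\alpha_2)+L(\alpha_1)+L(\alpha_2)$ for a suitable $L$, and then $(\alpha,\beta)\mapsto(\bar\phi(\alpha),\phi_0(\beta)+L(\alpha))$ is a homomorphism. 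It thus remains to prove that if $\bar\phi$ and $s\circ\bar\phi\circ s^{-1}$ are both $\FF_2$-linear then $\bar\phi$ is semilinear. Writing $\bar\phi(x)=\sum_ia_ix^{2^i}$ and $\phi_0(x)=\sum_ib_ix^{2^i}$ and expanding $\phi_0(x^{1+2^l})=\bar\phi(x)^{1+2^l}$ modulo $x^q-x$, comparison of coefficients gives $a_ia_{i-l}^{2^l}=0$ for all $i$ together with $a_aa_b^{2^l}=a_{b+l}a_{a-l}^{2^l}$ whenever $b\notin\{a,a-l,a-2l\}$ (indices mod $m$); a short combinatorial argument on the support of $(a_i)$, using that $\theta$ has odd order $\ge 3$, forces that support to be a single index, so $\bar\phi(x)=a_ix^{2^i}$ is semilinear. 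I expect this last step to be the main obstacle. Alternatively, $A=\Gamma L_1(q)$ can be taken from the computation of $\Aut(P)$ in~\cite{hi63}, or deduced from Kantor's classification of linear groups containing a Singer cycle once one notes that $x\mapsto x^{1+2^l}$ is not $\FF_2$-linear.

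Finally, the action of $\Aut(P)/K\cong\Gamma L_1(q)$ on $Z$ is the natural one and is faithful precisely because $\gcd(1+2^l,2^m-1)=1$ makes $\{c\in\FF_q^\times:c^{1+2^l}=1\}$ trivial; likewise on $P/Z$. Since $|\FF_q^\times|=2^m-1$ is odd and the cyclic complement $C_m$ acts faithfully on $\FF_q^\times$ (the order of $2$ modulo $2^m-1$ being $m$), the group $\Gamma L_1(q)$ has trivial $O_2$. Hence $O_2(\Aut(P))=K$ and $\Aut(P)/O_2(\Aut(P))\cong\Gamma L_1(q)\cong C_{q-1}\rtimes C_m$, with a generator of $C_{q-1}$ a Singer cycle on $\FF_q$ and $C_m$ the field automorphisms, as claimed.
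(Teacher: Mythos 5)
Your argument is correct, but it takes a genuinely different route from the paper. The paper's proof is short and citation-driven: it quotes Bryukhanova's theorem that $\Aut(P)$ is solvable, notes (via Huppert) that the kernel of $\Aut(P)\to\Aut(P/\Phi(P))\cong GL_m(2)$ is a $2$-group, observes that the defining automorphism of the Suzuki $2$-group maps to a Singer cycle, and then invokes Kantor's theorem on linear groups containing a Singer cycle, with solvability forcing the image to lie in the Singer normalizer $C_{q-1}\rtimes C_m$. You instead give an essentially self-contained computation: you identify the kernel $K$ of the action on $Z(P)\times P/Z(P)$ as $\Hom_{\FF_2}(P/Z(P),Z(P))$ (so $K\le O_2(\Aut(P))$), use $\gcd(1+2^l,2^m-1)=1$ (your valuation argument is right, and this is exactly where oddness of the order of $\theta$ enters) to see that the squaring map $s$ is bijective and ties the two actions together, characterize the liftable $\bar\phi$ as those with $s\circ\bar\phi\circ s^{-1}$ additive (your sufficiency argument via the alternating bilinear map $\delta$ and a choice of $L$ is fine), and then force semilinearity by a linearized-polynomial coefficient comparison. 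The step you flag as the main obstacle does work and is short: from $a_ia_{i-l}^{2^l}=0$ and the off-diagonal relations, any $a$ in the support $S$ gives $S\subseteq\{a,a-2l\}$, and applying the same to $a-2l$ would force $4l\equiv 0 \pmod m$, impossible since the order of $\theta$ is odd and at least $3$; so $S$ is a singleton. What the two approaches buy: the paper's is brief but leans on two substantial external results, while yours is elementary, identifies $O_2(\Aut(P))=K$ explicitly, and delivers the faithfulness of $\Aut(P)/O_2(\Aut(P))$ on both $Z(P)$ and $P/Z(P)$ in exactly the form later used (note the proposition's literal phrase ``the automorphisms act faithfully'' must be read modulo $O_2$, as your proof makes clear). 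One caution: your parenthetical fallback that the identification of $A$ with $\Gamma L_1(q)$ follows from Kantor ``once one notes that $x\mapsto x^{1+2^l}$ is not $\FF_2$-linear'' is not by itself a complete argument — ruling out a normal $GL_{m/s}(2^s)$ with $s<m$ needs an extra input such as solvability of $\Aut(P)$, which is precisely what the paper cites Bryukhanova for — but your primary computational argument does not need this.
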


\begin{proof}
By~\cite[Theorem 1]{br81} $\Aut(P)$ is solvable. Let $\Lambda:\Aut(P) \rightarrow \Aut(P/\Phi(P)) \cong GL_m(2)$ be the natural map. Then $\ker(\Lambda)$ is a $2$-group (see for example the proof of~\cite[Satz 3.17]{hu67}), so odd order automorphisms of $P$ correspond to odd order automorphisms of $P/\Phi(P)$. By the definition of a Suzuki $2$-group there is $\varphi \in \Aut(P)$ of order $q-1$ permuting the nontrivial elements of $Z(P)$ transitively, and so $\Lambda(\varphi)$ is a Singer cycle. By~\cite{ka80} $\im(\Lambda)$ contains $GL_{m/s}(2^s)$ as a normal subgroup for some $s$. Since $\im(\Gamma)$  is solvable, we must have $s=m$. The normalizer of a Singer subgroup in $GL_m(2)$ has the form $C_{q-1} \rtimes C_m$, hence so does $\im(\Lambda)$, noting that each field automorphism of $\FF_q$ gives rise to an automorphism of $P$ in the obvious way. The result follows, noting that in our situation $O_2(\im(\Lambda))=1$.
\end{proof}



\section{Controlled blocks and normal subgroups of Suzuki $2$-groups}
\label{Controlled}

A $p$-group $P$ is called resistant if every saturated fusion system on $P$ is given $P \rtimes E$ for some $p'$-group $E$. For background on fusion systems see~\cite{ako} or~\cite{craven}, and for more on resistant $p$-groups see~\cite{st06}. As noted in~\cite[Theorem 4.4]{cg12}, Suzuki $2$-groups are resistant since $Z(P)$ consists of the identity element and all involutions in $P$. Further the same is true for all normal subgroups of Suzuki $2$-groups once we note that abelian $p$-groups are resistant:

\begin{proposition}
\thlabel{Suzuki_A_resistant}
Let $P$ be a Suzuki $2$-group and let $Q \lhd P$. Then $Q$ is resistant.
\end{proposition}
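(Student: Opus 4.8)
We need to show every normal subgroup $Q$ of a Suzuki 2-group $P$ is resistant. The strategy splits according to the structure results in \thref{subgroup_structure}, using two standard facts: abelian $p$-groups are resistant, and Suzuki 2-groups themselves are resistant (already noted in the excerpt, from \cite[Theorem 4.4]{cg12}, because $Z(P) = \Omega_1(P)$).

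Let me organize by type. If $P$ has type $\cB$, $\cC$ or $\cD$, then by \thref{subgroup_structure}(b) either $|QZ(P)/Z(P)| \le 1$, i.e. $Q \le Z(P)$ is abelian (hence resistant), or $Z(P) \le Q$ with $Z(Q) = Z(P)$. In the latter case I would argue that $Q$ is itself a Suzuki 2-group — or at least that $\Omega_1(Q) = Z(Q)$ — so that the same criterion of \cite{cg12} (or \cite{st06}) applies: every element of order $2$ in $Q$ lies in $Z(Q)$, so $Q$ is resistant. Concretely, since $Z(Q) = Z(P)$ and every involution of $P$ lies in $Z(P) = \Omega_1(P)$, any involution of $Q$ lies in $Z(P) = Z(Q)$; thus $\Omega_1(Q) = Z(Q)$, and the hypothesis of the resistance criterion is met. (One should double-check that this criterion — "$Z(G)$ contains all involutions of $G$" implies $G$ resistant — applies to $G = Q$ directly; it does, as it is purely intrinsic to $Q$.)

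For type $\cA(m,\theta)$ with $\theta$ of odd order $k$ and $m = nk$, I would use parts (i)–(iv) of \thref{subgroup_structure}(a). If $|QZ(P)/Z(P)| \le 2$ then $Q$ is abelian by (iii), hence resistant. If $|QZ(P)/Z(P)| \ge 4$ then by (ii) we have $Z(P) \le Q$ and $Z(Q) = Z(P)$, and the same argument as above ($\Omega_1(Q) = Z(Q)$, apply the criterion) gives that $Q$ is resistant. This exhausts all cases, since $|QZ(P)/Z(P)|$ is a power of $2$.

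The main obstacle — really the only nonroutine point — is justifying that the resistance criterion "$\Omega_1(G) = Z(G)$" used in \cite{cg12} for Suzuki 2-groups applies verbatim to the subgroup $Q$ rather than just to $P$ itself; this is a matter of quoting \cite{cg12} or \cite{st06} at the right level of generality (the criterion is about the abstract group, not about $P$), together with the observation $\Omega_1(Q) \le \Omega_1(P) = Z(P) = Z(Q)$ whenever $Z(Q) = Z(P)$. Everything else is a direct appeal to \thref{subgroup_structure} and the fact that abelian groups are resistant.
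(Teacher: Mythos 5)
Your proof is correct and follows essentially the same route as the paper: use \thref{subgroup_structure} to conclude that $Q$ is either abelian or has $Z(Q)=Z(P)$ containing all involutions of $Q$, then invoke the intrinsic resistance criterion (the paper cites \cite[Theorem 4.8]{st06} for both cases, which settles the point you flagged about applying the criterion directly to $Q$). Your explicit case split by type is just an unpacking of how the lemma is applied; the content is the same.
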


\begin{proof}
By \thref{subgroup_structure} either $Q$ is abelian or $Z(Q)$ consists precisely of the identity element and the involutions in $Q$. In either case, it follows from~\cite[Theorem 4.8]{st06} that $Q$ is resistant.
\end{proof}

Before proceeding we recall definitions of subpairs and the inertial subgroup.

A $B$-subpair is a block $B$ of a group $G$ is a pair $(Q,b_Q)$, where $Q$ is a $p$-subgroup of $G$ and $b_Q$ is a
block of $QC_G(Q)$ with Brauer correspondent $(b_Q)^G=B$. The
$B$-subpairs with $|Q|$ maximized are called the Sylow
$B$-subpairs, and they are the $B$-subpairs for which $Q$ is a defect group 
of $B$. Letting $P$ be a defect group of $B$, we denote by $N_G(P,b_P)$ the stabilizer in $N_G(P)$ of $(P,b_P)$ under conjugation.

The \emph{inertial quotient} of $B$ is $E=N_G(P,b_P)/PC_G(P)$, together with the action of $E$ on $P$, and is determined by the fusion system $\mathcal{F}=\mathcal{F}_{(P,b_P)}(G,B)$ for $B$, sometimes called the Frobenius category. We refer to~\cite[Section 8.5]{lin2} for background on this. Note that $E$ is a $p'$-group. Basic Morita equivalence of blocks of finite groups preserves the Frobenius category (see~\cite[Section 9.10]{lin2}).

Following the presentation in~\cite[Section 8.14]{lin2}, a K\"ulshammer-Puig class is an element of $H^2(\Aut_\mathcal{F}(P),k^\times)$, which is isomorphic to $H^2(E,k^\times)$ (see~\cite[Remark 8.14.3]{lin2}). 

By~\cite[Theorem 6.14.1]{lin2} a block with normal defect group is determined up to basic Morita equivalence by the inertial quotient and K\"ulshammer-Puig class. Since $H^2(L,k^\times)$ is trivial when $L$ is cyclic (see for example~\cite[Proposition 1.2.10]{lin1}), it follows from~\cite[Proposition 1.2.15]{lin1} that if $E$ has cyclic Sylow $l$-subgroups for all primes $l$, then $H^2(E,k^\times)$ is trivial. 

The block $B$ is controlled if the fusion system $\mathcal{F}_{(P,b_P)}(G,B)$ is the same as that of its Brauer correspondent in $N_G(P)$. Every block with resistant defect group is controlled, so we have: 

\begin{corollary}
\thlabel{Suzuki_controlled}
Let $G$ be a finite group and $B$ be a block of $\cO G$ with defect group $P$ which is either a Suzuki $2$-group or a normal subgroup of a Suzuki $2$-group. Then $B$ is a controlled block.
\end{corollary}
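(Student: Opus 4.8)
The plan is short, because the substantive work has already been isolated in the preceding results. First I would dispose of the case distinction. The defect group $P$ is either a Suzuki $2$-group or a normal subgroup of one, and in both cases I claim $P$ is resistant. When $P$ is itself a Suzuki $2$-group this is the observation recalled above from~\cite[Theorem 4.4]{cg12}, which applies because $Z(P)$ is exactly the set consisting of $1$ together with all involutions of $P$. When $P \lhd Q$ for a Suzuki $2$-group $Q$, this is precisely \thref{Suzuki_A_resistant}; that proposition in turn rests on \thref{subgroup_structure}, according to which $P$ is either abelian, hence resistant, or satisfies that $Z(P)$ consists of the identity together with the involutions of $P$, so that~\cite[Theorem 4.8]{st06} applies.

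Next I would recall why resistance of the defect group forces the block to be controlled. The block fusion system $\mathcal{F} = \mathcal{F}_{(P,b_P)}(G,B)$ is saturated, so by the definition of resistant there is a $p'$-subgroup $E \leq \Aut(P)$ with $\mathcal{F} = \mathcal{F}_P(P \rtimes E)$; in this fusion system every morphism is the restriction of an element of $\Aut_\mathcal{F}(P)$, and hence $\mathcal{F} = N_\mathcal{F}(P)$. On the other hand, the fusion system of the Brauer correspondent of $B$ in $N_G(P)$ is always $N_\mathcal{F}(P)$, being controlled by $N_G(P,b_P)$, so $\mathcal{F}$ coincides with the fusion system of the Brauer correspondent, which is the definition of $B$ being controlled. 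In the write-up this last step is exactly the sentence ``every block with resistant defect group is controlled'' recalled immediately before the statement, so I would simply cite it.

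There is no genuine obstacle here: the work lies entirely in \thref{subgroup_structure}, \thref{Suzuki_A_resistant} and the cited resistance criteria of Stancu. The only point requiring a little care is keeping the two cases straight --- a Suzuki $2$-group versus a proper normal subgroup of one, the latter possibly abelian --- and noting that abelian $2$-groups are resistant, so that they are not an exception to the conclusion.
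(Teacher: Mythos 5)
Your proposal is correct and follows essentially the same route as the paper: both cases reduce to resistance of the defect group (via \thref{Suzuki_A_resistant}, resting on \thref{subgroup_structure} and Stancu's criteria, with the Suzuki $2$-group case also covered by~\cite[Theorem 4.4]{cg12}), and then the standard fact that blocks with resistant defect groups are controlled. The extra detail you supply on why resistance forces control (morphisms in $\mathcal{F}_P(P\rtimes E)$ restrict from $\Aut_{\mathcal F}(P)$, matching the fusion system of the Brauer correspondent) is accurate but not needed beyond the citation the paper itself uses.
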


Recall that $B$ is nilpotent if $\mathcal{F}_{(P,b_P)}(G,B)=\mathcal{F}_P(P)$. A controlled block is nilpotent precisely when the inertial quotient is trivial, a fact we will use frequently and without reference throughout.

\begin{corollary}
\thlabel{Z(P)central}
Let $G$ be a finite group and $B$ be a block of $\cO G$ with defect group $P$ which is a Suzuki $2$-group of type $\mathcal{A}$. If $Z(P) \leq Z(G)$, then $B$ is nilpotent.
\end{corollary}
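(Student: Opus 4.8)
The plan is to reduce nilpotency of $B$ to triviality of its inertial quotient and then exploit the faithful action of $\Aut(P)$ on $Z(P)$ established in \thref{type_A_auts}.

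First I would invoke \thref{Suzuki_controlled}: since $P$ is a Suzuki $2$-group, $B$ is a controlled block, and as recorded just above, a controlled block is nilpotent precisely when its inertial quotient $E = N_G(P,b_P)/PC_G(P)$ is trivial. So it suffices to show $E = 1$. Recall that $E$ embeds into $\Aut(P)$ and is a $2'$-group.

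Next I would use the hypothesis $Z(P) \leq Z(G)$. Then every element of $G$ — in particular every element of $N_G(P,b_P)$ — centralizes $Z(P)$, so the image of $N_G(P,b_P)$ in $\Aut(P)$, namely $E$, acts trivially on $Z(P)$. But by \thref{type_A_auts} the automorphism group of a Suzuki $2$-group of type $\cA$ acts faithfully on $Z(P)$. Hence $E = 1$, and therefore $B$ is nilpotent.

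There is no real obstacle here: the statement is a direct consequence of the two inputs (controlled blocks are nilpotent iff $E=1$, and $\Aut(P)$ acts faithfully on $Z(P)$). The only point worth stating carefully is that $E$ is identified with a subgroup of $\Aut(P)$ acting on $P$, so that "trivial action on $Z(P)$" combined with faithfulness on $Z(P)$ forces $E$ itself to be trivial; this is why the type $\cA$ hypothesis (via \thref{type_A_auts}) is essential.
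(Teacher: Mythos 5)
Your proposal is correct and follows essentially the same route as the paper: use that $B$ is controlled (\thref{Suzuki_controlled}) to reduce nilpotency to showing the odd-order inertial quotient acts trivially, then use \thref{type_A_auts} to see that no nontrivial odd-order automorphism of $P$ fixes $Z(P)$ pointwise, which is forced by $Z(P)\leq Z(G)$. The only cosmetic point is that $E=N_G(P,b_P)/PC_G(P)$ is a quotient of the image of $N_G(P,b_P)$ in $\Aut(P)$ (identified with an odd-order subgroup of automorphisms up to conjugacy), not that image itself, but since $P$ and $C_G(P)$ also centralize $Z(P)$ this does not affect the argument.
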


\begin{proof}
Since $B$ is controlled, it suffices to show that $N_G(P)/C_G(P)$ is a $2$-group. By \thref{type_A_auts} odd order automorphisms of $P$ are described by Singer cycles and field automorphisms, neither of which fix all elements of $Z(P)$, so we are done.
\end{proof}

\begin{remark}
\thlabel{nilpotency_criterion}
A block with normal defect group whose inertial quotient has cyclic Sylow $l$-subgroups for all primes $l$ is nilpotent if and only if the number $l(B)$ of simple $B \otimes_\cO k$-modules is $1$.
\end{remark}

Controlled $2$-blocks of quasisimple groups have been described by An in~\cite[Theorem 1.1]{an20}, from which we have the following, noting that the Sylow $2$-subgroups of $PSU_3(2^n)$ and $^2B_2(2^{2n+1})$ are indeed Suzuki $2$-groups:

\begin{proposition}
\thlabel{Suzuki_quasisimple}
Let $G$ be a quasisimple group and $B$ a block of $\cO G$ with defect group $P$ which is a Suzuki $2$-group or a nonabelian normal subgroup of a Suzuki $2$-group. Then $P \in \Syl_2(G)$ and $G/Z(G)$ is $PSU_3(2^n)$ or $^2B_2(2^{2n+1})$ for some $n$.
\end{proposition}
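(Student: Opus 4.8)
The plan is to reduce to An's classification in~\cite{an20} by showing that the hypotheses force $B$ to be a \emph{controlled} block of a quasisimple group whose defect group is a Suzuki $2$-group, and then to read off from~\cite[Theorem 1.1]{an20} which quasisimple groups can occur. First I would dispose of the case where the defect group $P$ of $B$ is literally a Suzuki $2$-group: by \thref{Suzuki_controlled}, $B$ is controlled, and since $G$ is quasisimple, $G/Z(G)$ is simple, so An's list of quasisimple groups admitting a controlled $2$-block with nonabelian defect group applies directly. The Suzuki $2$-groups are nonabelian by definition, so $P$ is nonabelian; inspecting An's list, the only simple groups whose Sylow $2$-subgroups (or the defect group of a non-principal controlled block) are isomorphic to a Suzuki $2$-group are $PSU_3(2^n)$ and ${}^2B_2(2^{2n+1})$, for which the Sylow $2$-subgroups are of type $\cB$ and type $\cA$ respectively (as recalled in Section~\ref{Suzuki}). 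In those cases the defect group is a full Sylow $2$-subgroup, giving $P \in \Syl_2(G)$.

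The substantive case is when $P$ is a \emph{nonabelian normal} subgroup $Q \lhd S$ of a Suzuki $2$-group $S$. Here I would first invoke \thref{subgroup_structure}: since $Q$ is nonabelian, part~(a)(iii) rules out $|QZ(S)/Z(S)| \le 2$ when $S$ is of type $\cA$, so $|QZ(S)/Z(S)| \ge 4$ and part~(a)(ii) gives $Z(S) \le Q$ with $Z(Q) = Z(S)$; when $S$ is of type $\cB$, $\cC$ or $\cD$, part~(b) gives the same conclusion as soon as $Q \not\le Z(S)$, which holds since $Q$ is nonabelian. Thus in all cases $Z(Q) = \Omega_1(Q)$ consists exactly of the identity and the involutions of $Q$. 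By \thref{Suzuki_A_resistant} (or directly \thref{Suzuki_controlled}), $B$ is controlled, so again An's classification applies to $B$ as a block of the quasisimple group $G$. Now I must argue that a nonabelian normal subgroup of a Suzuki $2$-group that arises as the defect group of a controlled $2$-block of a quasisimple group forces $G/Z(G)$ onto the same short list and forces $P$ to be Sylow. The cleanest route is: from An's list, the quasisimple groups with a controlled $2$-block of \emph{nonabelian} defect group $D$ are enumerated together with the isomorphism type of $D$; one checks that each such $D$ that embeds as a normal subgroup of \emph{some} Suzuki $2$-group is itself a Suzuki $2$-group, and that in those cases $D$ is a defect group of the principal block and equals a Sylow $2$-subgroup. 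Equivalently, one shows directly that a proper nonabelian normal subgroup of a Suzuki $2$-group is never of the other nonabelian types on An's list, which is a finite bookkeeping check against the structural constraint $Z(Q) = \Omega_1(Q)$ derived above.

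The main obstacle is this last comparison step: matching the normal-subgroup structure coming out of \thref{subgroup_structure} against the explicit defect-group types in~\cite[Theorem 1.1]{an20}, and in particular confirming that no \emph{proper} nonabelian normal subgroup of a Suzuki $2$-group can serve as the defect group of a controlled $2$-block of a quasisimple group (so that one is always in the Sylow case with $P$ itself a Suzuki $2$-group). I expect this to reduce to checking that, among the nonabelian $2$-groups appearing in An's list, only the Suzuki $2$-groups satisfy $Z(D) = \Phi(D) = [D,D] = \Omega_1(D)$, which is the defining structural signature established in~\cite{hi63} and recalled in Section~\ref{Suzuki}; once that is in hand, the identification $G/Z(G) \in \{PSU_3(2^n), {}^2B_2(2^{2n+1})\}$ and $P \in \Syl_2(G)$ follows immediately.
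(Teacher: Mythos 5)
Your proposal is correct and follows essentially the same route as the paper: the defect group (a Suzuki $2$-group or a normal subgroup of one) is resistant, so $B$ is controlled by \thref{Suzuki_controlled}, and the conclusion is then read off from An's classification of controlled $2$-blocks of quasisimple groups in~\cite[Theorem 1.1]{an20}. The paper's proof is in fact just this one-line citation (noting that the Sylow $2$-subgroups of $PSU_3(2^n)$ and $^2B_2(2^{2n+1})$ are Suzuki $2$-groups), so your additional structural analysis of the normal-subgroup case and the final bookkeeping against An's list, while harmless, is more than the paper itself carries out.
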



\section{Reductions and proof of \thref{main_theorem}}
\label{Proofs}

We show that every block whose defect groups are Suzuki $2$-groups is basic Morita equivalent to what we call a reduced block. We will then show that finite groups with such a reduced block have a very restricted structure.

The following result is used in previous reductions for results concerning Morita equivalence classes of blocks, and encapsulates the use of Fong-Reynolds reductions and the K\"ulshammer-Puig reductions~\cite{kp90}. Recall that a block $B$ is \emph{quasiprimitive} if every block of every normal subgroup covered by $B$ is $G$-stable. In particular $B$ covers a unique block for each normal subgroup. 

\begin{lemma}[Proposition 6.1 of~\cite{ae23}]
\thlabel{reduced_block}
Let $G$ be a finite group and $B$ a block of $\cO G$ with defect group $P$. Then there is a finite group $H$ and a block $C$ of $\cO H$ such that $B$ is basic Morita equivalent to $C$, a defect group $P_H$ of $C$ is isomorphic to $P$ and:
\begin{enumerate} 
\item[(R1)] $C$ is quasiprimitive;
\item[(R2)] If $N \lhd H$ and $C$ covers a nilpotent block of $\cO N$, then $N \leq O_p(H)Z(H)$ with $O_{p'}(N) \leq [H,H]$ cyclic. In particular $O_{p'}(H) \leq Z(H)$.
\end{enumerate}
Note that $B$ and $C$ have the same Frobenius category $\mathcal{F}$, and the same K\"ulshammer-Puig class in $H^2(\Aut_\mathcal{F}(P),k^\times)$.
\end{lemma}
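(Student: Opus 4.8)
The plan is to argue by a minimal-counterexample (equivalently, well-founded induction) over the class of pairs that are basic Morita equivalent to $B$, invoking two classical reduction tools: the Fong--Reynolds correspondence to secure quasiprimitivity (R1), and the Külshammer--Puig theory of extensions of nilpotent blocks~\cite{kp90} to secure (R2). Let $\mathcal{S}$ be the set of pairs $(H,C)$, with $H$ a finite group and $C$ a block of $\cO H$, such that $C$ is basic Morita equivalent to $B$. Since a basic Morita equivalence preserves the isomorphism type of the defect group, every such $C$ automatically has a defect group isomorphic to $P$, and since such an equivalence also preserves the Frobenius category and (as recalled in Section~\ref{Controlled}) the Külshammer--Puig class, the final sentence of the lemma will be automatic once a member of $\mathcal{S}$ satisfying (R1) and (R2) is exhibited. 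As $(G,B)\in\mathcal{S}$, the set is nonempty; I would choose $(H,C)\in\mathcal{S}$ with $|H|$ minimal and show that it satisfies (R1) and (R2).

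For (R1): if $C$ is not quasiprimitive then there are $N\lhd H$ and a block $b$ of $\cO N$ covered by $C$ whose $H$-orbit has size greater than $1$, so the inertial group $T$ of $b$ in $H$ is a proper subgroup. Let $\tilde C$ be the Fong--Reynolds correspondent of $C$ over $T$, i.e. the unique block of $\cO T$ covering $b$ with $\tilde C^{\,H}=C$. Then $\tilde C$ and $C$ share a defect group, and the bimodule $e_C\,\cO H\,e_{\tilde C}$ induces a Morita equivalence between $\cO T\tilde C$ and $\cO H C$; as it is a direct summand of $\cO H$ regarded as an $\cO H$-$\cO T$-bimodule, it is a trivial-source (hence endopermutation-source) bimodule, so the equivalence is basic. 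Composing with the basic Morita equivalence between $C$ and $B$ places $(T,\tilde C)$ in $\mathcal{S}$ with $|T|<|H|$, contradicting minimality. Hence $C$ is quasiprimitive.

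For (R2): quasiprimitivity ensures that every block of a normal subgroup covered by $C$ is $H$-stable. Suppose $N\lhd H$ and $C$ covers a nilpotent block $b$ of $\cO N$, and choose a defect group $P_H$ of $C$ with $Q:=P_H\cap N$ a defect group of $b$. The Külshammer--Puig structure theorem~\cite{kp90} for the block of $\cO H$ covering the stable nilpotent block $b$ produces a block $C'$ of $\cO H'$ that is basic Morita equivalent to $C$, with defect group isomorphic to $P_H$, where $H'$ has a normal subgroup $M=Q_0\times Z_0$ with $Q_0\cong Q$ a normal $p$-subgroup of $H'$ and $Z_0\leq Z(H')$ a cyclic $p'$-group, and with $H'/M\cong H/N$; in particular $M\leq O_p(H')Z(H')$ and $O_{p'}(M)=Z_0$ is cyclic. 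This reduction does not increase the order of the group, and in the equality case $|H'|=|H|$ one reads off that $N$ is already of the shape of $M$, namely $N\leq O_p(H)Z(H)$ with $O_{p'}(N)$ cyclic; minimality of $|H|$ forces this. It remains to see that $O_{p'}(N)$ actually lies in $[H,H]$, and here I would use minimality once more: if $Z_1\leq Z(H)$ were a nontrivial $p'$-subgroup with $Z_1\cap[H,H]=1$, then $Z_1$ would be complemented in $H$, say $H=K\times Z_1$, whence $C$ would be basic Morita equivalent to a block of $\cO K$ with $|K|<|H|$, a contradiction; thus every central $p'$-subgroup of $H$ lies in $[H,H]$, and applying this to $O_{p'}(N)\leq Z(H)$ gives $O_{p'}(N)\leq[H,H]$. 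This establishes (R2); the stated ``in particular'' follows by applying (R2) to $O_{p'}(H)$, which is covered by a block of defect zero, hence a nilpotent block.

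The main obstacle is the numerical control used in the (R2) step: one must show that the Külshammer--Puig reduction never increases the order of the group, and that $|H'|=|H|$ can occur only when the normal subgroup $N$ already has the required form. This comes down to bounding the cyclic central $p'$-group $Z_0$ — equivalently, the order of the Külshammer--Puig class governing the extension of $b$ — in terms of the structure of the nilpotent block $b$ of $\cO N$, using that $\cO N b$ is Morita equivalent to $\cO Q$. Alongside this, some care is needed to verify that the Fong--Reynolds and Külshammer--Puig Morita equivalences are genuinely basic and that they carry the Frobenius category and Külshammer--Puig class of $B$ through unchanged; these verifications are precisely what is recorded in~\cite[Proposition~6.1]{ae23}.
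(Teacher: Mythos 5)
The paper does not actually prove this lemma: it is quoted verbatim from~\cite[Proposition 6.1]{ae23}, and the surrounding text only records that it packages the Fong--Reynolds and K\"ulshammer--Puig reductions. Your sketch follows that same standard route, and your treatment of (R1) (Fong--Reynolds bimodule is a summand of $\cO H$, hence trivial source, hence the equivalence is basic) is fine. The genuine gap is exactly where you flag ``the main obstacle'', and it is the heart of the proposition rather than a verification to be outsourced: a minimal counterexample over $|H|$ does not close. The K\"ulshammer--Puig step replaces $H$ by a group of order $|Z_0|\cdot|Q|\cdot|H/N|$, where the cyclic central $p'$-group $Z_0$ has order equal to the order of a class in $H^2(H/N,k^\times)$; this is controlled by $H/N$, not by $|N:Q|$, so the inequality $|H'|\leq|H|$ you need (equivalently $|Z_0|\leq|N:Q|$) is unsubstantiated, and even if it held, equality of orders would not let you ``read off'' that $N\leq O_p(H)Z(H)$ with $O_{p'}(N)$ cyclic --- that implication is asserted, not proved. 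The published arguments (\cite[Proposition 6.1]{ae23}, following~\cite{eel20}) avoid this by minimising a different quantity (essentially $|H:Z(H)|$, refined lexicographically), which strictly drops under Fong--Reynolds since $Z(H)\leq Z(T)$, and by iterating the two reductions while checking that the chosen measure drops at each K\"ulshammer--Puig step in which $N\not\leq O_p(H)Z(H)$.

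There is a second, independent error in your (R2) step: the claim that a nontrivial $p'$-subgroup $Z_1\leq Z(H)$ with $Z_1\cap[H,H]=1$ is complemented in $H$ is false (take $H=C_4$ with $p$ odd and $Z_1$ its subgroup of order $2$), and in any case your dichotomy does not treat $1\neq Z_1\cap[H,H]<Z_1$, which must also be excluded to conclude $O_{p'}(N)\leq[H,H]$. The containment in $[H,H]$ and the cyclicity of $O_{p'}(N)$ need separate arguments: one passes to the quotient by the kernel of the linear character of $O_{p'}(Z(H))$ covered by $C$ (which makes $O_{p'}(Z(H))$ cyclic and realises $C$ as a block of the quotient), and when $O_{p'}(Z(H))\not\leq[H,H]$ one replaces $H$ by a suitable proper subgroup $K$ with $H=K\,O_{p'}(Z(H))$, checking that $C$ is isomorphic to a block of $\cO K$ and that these moves are basic Morita equivalences compatible with the chosen minimisation. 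These verifications, together with the correct induction measure, are precisely what~\cite[Proposition 6.1]{ae23} supplies and what your proposal leaves open.
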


We call the pair $(H,C)$, where $C$ is a block of $\cO H$, \emph{reduced} if it satisfies conditions (R1) and (R2) of \thref{reduced_block}. If the group is clear, then we just say $C$ is reduced.

Before proceeding we recall some definitions concerning the generalized Fitting subgroup. Details may be found in~\cite{asc00}. A component of $G$ is a subnormal quasisimple subgroup. The components of $G$ commute, and we define the layer $E(G)$ of $G$ to be the normal subgroup of $G$ generated by the components. The layer is a central product of the components. The Fitting subgroup $F(G)$ is the largest nilpotent normal subgroup of $G$, and is the direct product of $O_l(G)$ for all primes $l$ dividing $|G|$. The \emph{generalized Fitting subgroup} is $F^*(G)=E(G)F(G)$. This has the property that $C_G(F^*(G)) \leq F^*(G)$. 

Let $B$ be a quasiprimitive block of $G$ with defect group $P$. Then $B$ covers a unique block $B_E$ of $E(G)$, and this has defect group $P \cap E(G)$ (see~\cite[Theorem 15.1]{alp86}). Let $S$ be a component of $G$. Then since $E(G)$ is a central product of the components, $B_E$ covers a unique block $B_S$ of $S$, and this has defect group $P \cap S$.

\begin{lemma}
\thlabel{component_not_nilpotent}
With the notation above, if $B_S$ is nilpotent, then $B_E$ is nilpotent.
\end{lemma}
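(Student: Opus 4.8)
The plan is to pass from the component $S$ to its normal closure in $G$, where condition (R2) can be brought to bear; the only genuinely delicate point is how nilpotency behaves under central products.

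Write $S = S_1, \ldots, S_t$ for the $G$-conjugates of $S$ and set $N = \langle S^G \rangle = S_1 \cdots S_t \lhd G$, a central product of the $S_i$. As for $E(G)$, the quasiprimitive block $B$ covers a unique --- hence $G$-stable --- block $B_N$ of $\cO N$, with defect group $P \cap N$, and $B_N$ covers a unique block of each $S_i$, with defect group $P \cap S_i$. Since $G$ permutes $S_1, \ldots, S_t$ transitively while stabilising $B_N$, these blocks of the $S_i$ are $G$-conjugate to one another, hence all nilpotent, being $G$-conjugate to $B_S$.

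The crux is to deduce that $B_N$ is itself nilpotent. This rests on the fact that a block of a central product of finite groups is nilpotent exactly when each of the blocks of the central factors that it covers is: inflating to $S_1 \times \cdots \times S_t$, the block in question becomes a tensor product of the blocks $B_{S_i}$, and the Frobenius category of such a tensor product is the direct product of their Frobenius categories, which is trivial precisely when each factor is; passing back along the central quotient does not disturb this. Granting that $B_N$ is nilpotent, we have $N \lhd G$ with $B$ covering the nilpotent block $B_N$, so (R2) gives $N \leq O_p(G)Z(G)$. But $N$ is a central product of quasisimple groups, so is perfect, whereas $O_p(G)Z(G)$ is nilpotent; hence $N = 1$, contradicting $1 \neq S \leq N$. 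Therefore no component of $G$ has a nilpotent covered block, so when $G$ has a component the hypothesis of the lemma is never met, and when $G$ has none we have $E(G) = 1$ and $B_E$ is the (nilpotent) block of the trivial group --- either way the assertion holds.

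I expect the main obstacle to be this central-product step: one has to carry the central identifications among the $S_i$ carefully through the comparison of the defect groups and fusion systems of $B_N$, of the inflated block of $S_1 \times \cdots \times S_t$, and of the $B_{S_i}$, and check that nilpotency really is detected factor by factor. An alternative is to match up $B_N$-subpairs with tuples of $B_{S_i}$-subpairs and compare inertial quotients directly. The reduction to $N$ and the appeal to (R2) are routine.
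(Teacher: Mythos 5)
Your block-theoretic mechanics are sound and are in the spirit of the argument the paper points to (its own proof is simply a reference to the proof of \cite[Proposition 4.3]{eel20}): pass to the product $N=\langle S^G\rangle$ of the $G$-orbit of $S$, identify the covered block of this central product with a tensor product of the conjugates of $B_S$ over the corresponding direct product, observe that nilpotency is detected factor by factor, and handle the central quotient (for the last step you should cite \cite{wa94} or the domination results, as the paper does elsewhere, since the kernel of $S_1\times\cdots\times S_t\to N$ may contain $p$-elements). The genuine gap is that what you prove is not the stated lemma. The lemma assumes only that $B$ is quasiprimitive; the conditions (R1)--(R2) define a \emph{reduced} pair and are not hypotheses here --- the lemma is precisely the ingredient that is later combined with (R2) (in \thref{components}, inside the proof of \thref{main_reduction}) to rule out nilpotent component blocks for reduced pairs. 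By writing ``(R2) gives $N \leq O_p(G)Z(G)$'' you import a hypothesis you do not have, and your conclusion that the hypothesis ``$B_S$ nilpotent'' can never occur is not available in this generality: a quasiprimitive block can perfectly well cover a nilpotent block of a component (a block of defect zero, for instance).

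Once the appeal to (R2) is removed, your orbit argument yields only that the block of $\langle S^G\rangle$ covered by $B$ is nilpotent. That is strictly weaker than the asserted conclusion, which concerns $B_E$, the block of the whole layer $E(G)$: components outside the $G$-orbit of $S$ are untouched by your argument, and nothing in the hypotheses controls their covered blocks, so there is no way to pass from ``$B_N$ nilpotent'' to ``$B_E$ nilpotent'' along your route. In short, you have proved the downstream application (in a reduced pair no component has a nilpotent covered block) rather than the implication stated, and as a proof of the lemma as written the argument fails at two points: the unlicensed use of (R2), and the jump from the orbit of $S$ to all of $E(G)$.
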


\begin{proof}
This argument may be found in the proof of~\cite[Proposition 4.3]{eel20}.    
\end{proof}

\begin{lemma}
\thlabel{rank_out_mult}
\begin{enumerate}[(i)]
    \item The $2$-rank of the centre of a quasisimple group is at most two.
    \item The $2$-rank of any section of the outer automorphism group $\Out(S)$ of a nonabelian simple group $S$ is at most three. Further, if $H \leq \Out(S)$ has $2$-rank $3$, then there is $N \leq H$ with $H/N \cong C_2 \times C_2$.
\end{enumerate}
\end{lemma}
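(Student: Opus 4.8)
The plan is to reduce both parts to the classification of finite simple groups and to known structural facts about Schur multipliers and outer automorphism groups. For part (i), I would run through the list of quasisimple groups: for a quasisimple group $S$, the centre $Z(S)$ is a quotient of the Schur multiplier $M(S/Z(S))$, so it suffices to check that the Sylow $2$-subgroup of $M(T)$ has rank at most two for every nonabelian simple group $T$ (and the abelian case is trivial). For the alternating groups the multipliers are cyclic (order $2$ or $6$, with the exceptional $A_6, A_7$ cases having $2$-part of order $2$), for sporadic groups one reads off the ATLAS, and for groups of Lie type the multiplier is the product of a ``canonical'' part (the fundamental group of the algebraic group, which for the relevant root systems has $2$-rank at most two — e.g. $D_n$ gives $C_2 \times C_2$ at the $2$-part when $n$ is even over a field of odd characteristic) and a small exceptional part, none of which pushes the $2$-rank above two. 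I would cite a standard reference such as the Gorenstein--Lyons--Solomon volumes or \cite{asc00} rather than reproducing the table.

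For part (ii), the key input is the structure of $\Out(S)$ for $S$ nonabelian simple. For alternating and sporadic $S$, $\Out(S)$ has order dividing $4$ (indeed is cyclic or Klein four, the latter only for $S = A_6$), so every section has $2$-rank at most two and the claim is immediate. The substantive case is $S$ of Lie type, where $\Out(S) = \mathrm{Outdiag}(S) \rtimes (\Phi \times \Gamma)$ with $\Phi$ cyclic (field automorphisms) and $\Gamma$ the graph automorphism group; $\Gamma$ has order at most $2$ except for type $D_4$, where $\Gamma \cong S_3$ has $2$-rank one, and the diagonal part $\mathrm{Outdiag}(S)$ is cyclic except for $D_n$ with $n$ even, where its $2$-part can be $C_2 \times C_2$. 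Adding up: the $2$-rank of $\Out(S)$ is bounded by (rank of $2$-part of $\mathrm{Outdiag}$) $+$ (rank of $2$-part of $\Phi$) $+$ (rank of $2$-part of $\Gamma$) $\leq 2 + 1 + 1 = 4$ a priori, so I need to be slightly more careful to get the bound $3$: in the one case where $\mathrm{Outdiag}$ contributes $2$ (namely $D_n$, $n$ even, $q$ odd) the graph automorphism group $\Gamma$ is only $C_2$ and the field part $\Phi$ is cyclic, and one checks the extension does not create rank $4$; in all other cases $\mathrm{Outdiag}$ is cyclic so the rank is at most $1 + 1 + 1 = 3$. For the final assertion, if $H \leq \Out(S)$ has $2$-rank exactly $3$, I would argue that a rank-$3$ elementary abelian $2$-subgroup forces $S$ to be of the type where all three ``coordinates'' (diagonal, field, graph) contribute, and then the quotient of $H$ by the subgroup generated by the field-automorphism direction together with one diagonal direction is $C_2 \times C_2$; more uniformly, since $\Phi$ is cyclic and normal-ish in the relevant quotient, one exhibits $N \leq H$ with $H/N$ a quotient of a rank-$\leq 3$ $2$-group that is forced to be $C_2 \times C_2$ by a direct case check.

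The main obstacle will be the bookkeeping in part (ii) for the Lie type groups: correctly combining the diagonal, field, and graph contributions (with their semidirect-product structure, not just a direct product) to be sure the $2$-rank never reaches $4$, and then in the rank-$3$ situation verifying in each surviving family that a Klein-four quotient genuinely exists. I expect this to be a finite, if somewhat tedious, verification against the tables of $\Out(S)$ in the Lie type case (conveniently summarised in \cite[Theorem 2.5.12]{asc00} or the GLS reference), and I would present it as a case analysis rather than a single slick argument. The remaining parts — the alternating, sporadic, and centre computations — are immediate citations.
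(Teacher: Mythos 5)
Your plan follows the same route as the paper, which disposes of the lemma by verifying it from the ATLAS \cite{atlas} together with the description of $\Out(S)$ in \cite[Theorem 2.5.12]{gls}; your part (i) and your bound on the $2$-rank in part (ii) are a reasonable expansion of that check (note only that in the one family where $\mathrm{Outdiag}$ has $2$-rank two, namely $D_n$ with $n$ even and $q$ odd, the graph group is $S_3$ when $n=4$, not $C_2$; since $2^2 \rtimes S_3 \cong S_4$ has $2$-rank $2$ the bound survives, but the case must be included). However, two points are genuine gaps. First, part (ii) is a statement about \emph{sections} of $\Out(S)$, and that is how the paper uses it (a rank-four $2$-group section is excluded in \thref{comp_defect_no_in_Z(P)}); bounding the $2$-rank of $\Out(S)$ itself is not enough, since a quotient can have larger $2$-rank than any subgroup (already $Q_8$ shows this). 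You need an extra step, e.g.\ subadditivity of sectional $2$-rank along $1 \to \mathrm{Outdiag} \to \Out(S) \to \Phi\Gamma \to 1$, together with the observation that in the critical family a Sylow $2$-subgroup of $\Out(S)$ is $D_8 \times (\mathrm{cyclic})$, whose sectional rank is $3$.

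Second, your argument for the final assertion does not work as described. An arbitrary $H \leq \Out(S)$ of $2$-rank $3$ need not be a $2$-group, need not contain your three ``coordinate directions'', and the subgroup you propose to factor out need not be normal in $H$, so ``the quotient of $H$ by the field direction together with one diagonal direction'' is not even defined. Worse, the deferred ``direct case check'' is exactly where the difficulty sits: for $S={\rm P\Omega}_8^+(p^2)$ with $p$ odd one has $\Out(S)\cong S_4\times C_2$, and the subgroup $H=A_4\times C_2$ has $2$-rank $3$ but abelianisation $C_6$, hence no normal subgroup with quotient $C_2\times C_2$ at all. So no bookkeeping of the kind you sketch can prove the assertion for arbitrary subgroups $H\leq\Out(S)$; a correct treatment must exploit additional properties of the subgroups that actually arise in the applications (for instance, that they contain the image of a Sylow or defect $2$-subgroup), and this issue should be raised against the statement of the lemma itself, since the paper's one-line verification does not display the cases either.
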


\begin{proof}
This may be checked in~\cite{atlas}, using~\cite[Theorem 2.5.12]{gls} for a detailed description of $\Out(S)$ where necessary.
\end{proof}

\begin{lemma}
\thlabel{Sz(8)_even_index}
If $C$ is a quasiprimitive non-nilpotent block of a finite group $G$ with defect group $P$ of type $\cA$ with $|Z(P)|=8$, then $G$ cannot have a normal subgroup $H$ with $G/H \cong C_2 \times C_2$.
\end{lemma}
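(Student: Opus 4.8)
The statement is equivalent to saying that $G/O^2(G)$ is cyclic, so suppose instead that there is $H\lhd G$ with $G/H\cong C_2\times C_2$. First I would set up notation and reduce: by \thref{small_cases}, $P$ is a Sylow $2$-subgroup of $^2B_2(8)$, so $|P|=64$ and $Z(P)=\Phi(P)=\Omega_1(P)$ is elementary abelian of order $8$; by \thref{Suzuki_controlled} $C$ is controlled, and since it is non-nilpotent its inertial quotient $E$ is a nontrivial $2'$-subgroup of $\Aut(P)/O_2(\Aut(P))\cong C_7\rtimes C_3$ by \thref{type_A_auts}, so $|E|\in\{3,7,21\}$. As $G/H$ is a $2$-group we have $O_{2'}(G)\le O^2(G)\le H$, and passing to $G/O_{2'}(G)$ — which preserves quasiprimitivity, non-nilpotency, the defect group up to isomorphism, and the quotient $C_2\times C_2$ — I may assume $O_{2'}(G)=1$, so $F^*(G)=O_2(G)E(G)$ and $C_G(F^*(G))\le F^*(G)$. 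Since $G/H$ is abelian, $[P,P]=Z(P)\le H$, so $Q:=P\cap H$ is a defect group of the unique $G$-stable block $b$ of $H$ covered by $C$, with $Z(P)\le Q\lhd P$ and $P/Q$ a quotient of $P/Z(P)\cong C_2^3$ that embeds in $G/H$; by \thref{subgroup_structure}, $|P/Q|\in\{1,2,4\}$, with $Q$ nonabelian and $Z(Q)=Z(P)$ when $|P/Q|\le 2$ and $Q\cong C_4\times C_2\times C_2$ when $|P/Q|=4$.

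The first main case is $E(G)\ne 1$. If $S$ is a component with $P\cap S=1$, then $C$ covers a defect-zero block of the normal subgroup $M=\langle S^G\rangle$ (note $M\le C_G(S_0)$ for the other components $S_0$, giving $P\cap M\le C_P(\cdots)=1$), and by the K\"ulshammer--Puig reductions $C$ is basic Morita equivalent, with the same defect group and Frobenius category, to a block of a central extension of $G/M$; as $M$ is perfect this extension still has $C_2\times C_2$ as a quotient, so after such reductions (inducting on the number of components) every component of $G$ meets $P$ nontrivially. Now \thref{subgroup_structure}(a) shows $P$ has no proper normal subgroup isomorphic to a Suzuki $2$-group (the only nonabelian normal subgroups with $|QZ(P)/Z(P)|\ge 4$ have $Z(Q)=Z(P)$ and order $32$ or $64$, and the order-$32$ ones are not ultraspecial) and that every involution of $P$ lies in $Z(P)$; combined with \thref{Suzuki_quasisimple} this forces a component $S$ to contain $P$, so $P\in\Syl_2(S)$ and $S\cong\,^2B_2(8)$ or $PSU_3(4)$ on comparing $2$-parts (the $2$-fold covers of $^2B_2(8)$ being excluded by Sylow order), and $S$ is then the unique component, with $C_G(S)$ having $O_2=O_{2'}=E=1$ and hence $C_G(S)=1$. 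Thus $S\le G\le\Aut(S)$, and since $\Out(^2B_2(8))\cong C_3$ and $\Out(PSU_3(4))\cong C_4$ are cyclic, $G/O^2(G)=G/S$ is cyclic, a contradiction. (The remaining possibility, that no component contains $P$, forces $P\cap E(G)\le Z(P)$ to be small; I would combine this with the analysis of $O_2(G)$ below, applied to $O_2(G)F^*(G)$, to see that $C$ must then be nilpotent.)

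So assume $E(G)=1$, i.e.\ $F^*(G)=O_2(G)$, $C_G(O_2(G))=Z(O_2(G))\le O_2(G)\le P$, and hence $C_P(O_2(G))=Z(O_2(G))$. Using the explicit centraliser $C_P((\alpha,0))=\langle(\alpha,0)\rangle Z(P)$ of order $16$ for $(\alpha,0)\notin Z(P)$, together with \thref{subgroup_structure}(a), this pins down $O_2(G)$: either $O_2(G)=P$; or $O_2(G)$ is abelian, necessarily $\cong C_4\times C_2\times C_2$; or $O_2(G)$ is a maximal subgroup of $P$ (so $Z(O_2(G))=Z(P)$). If $O_2(G)=P$ then $P\lhd G$, $C_G(P)=Z(P)\le P$, so the inertial quotient is $G/P$, a $2'$-group, whence $P\in\Syl_2(G)$ and $G\cong P\rtimes E$; here $E$ acts on $P/Z(P)\cong\FF_2^3$ either fixed-point-freely ($7\mid|E|$) or as $\mathrm{trivial}\oplus(\text{$2$-dimensional irreducible})$ ($E=C_3$), so $[P,E]Z(P)$ has index $1$ or $2$ in $P$, $P/[P,E]$ is cyclic, and the $2$-part of $G^{ab}\cong (P/[P,E])\times E^{ab}$ is cyclic, contradiction. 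In the other two cases $C_G(O_2(G))$ is a $2$-group, so $E$ acts faithfully on $O_2(G)$ and on $O_2(G)/\Phi(O_2(G))$, which is $C_2^2$ in the maximal-subgroup case and of order at most $8$ in the abelian case; since $7\nmid|\Aut(C_4\times C_2^2)|=2^6\cdot 3$ and $7\nmid|\Aut(C_2^2)|$, this gives $E=C_3$. Then $\bar G:=G/O_2(G)\le\Aut(O_2(G))$ has $O_2(\bar G)=1$ and is a $\{2,3\}$-group with cyclic Sylow $3$-subgroup, so by Burnside's $p^aq^b$-theorem $\bar G\in\{1,C_3,S_3\}$; comparing $|P|=64$ with $|G|=|O_2(G)|\cdot|\bar G|$ leaves only $O_2(G)$ of order $32$ with $\bar G\cong S_3$, and there $E=C_3$ acts fixed-point-freely on $O_2(G)/\Phi(O_2(G))\cong C_2^2$, so $[O_2(G),E]=O_2(G)\le[G,G]$ and $G^{ab}\cong S_3^{ab}=C_2$, contradiction once more.

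I expect the two most delicate points to be, first, the reduction removing components with trivial intersection with $P$ while preserving the $C_2\times C_2$-quotient — one must carry the defect group and Frobenius category through the K\"ulshammer--Puig reduction and argue by an appropriate induction — and second, the sub-case $E(G)\ne 1$ with no component containing $P$, where $P\cap E(G)\le Z(P)$ is small and one must combine the constraints on $O_2(G)$, $E(G)$ and the inertial quotient to conclude that $C$ is nilpotent, contrary to hypothesis. The $2$-constrained case itself, once $O_2(G)$ is pinned down, is then a short divisibility-and-commutator argument.
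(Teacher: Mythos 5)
The decisive gap is in your $E(G)\neq 1$ branch. \thref{subgroup_structure} together with \thref{Suzuki_quasisimple} does \emph{not} force a component to contain $P$: all you get is that for each component $S$ the intersection $P\cap S$ is either all of $P$ or abelian (it can be $C_2\times C_2$, all of $Z(P)$, or $C_4\times C_2\times C_2$), because \thref{Suzuki_quasisimple} says nothing about quasisimple groups whose covered block has abelian defect group. That is exactly the configuration you defer in your parenthetical, and the proposed fix there is both inaccurate ($P\cap E(G)\le Z(P)$ need not hold, since $P\cap S$ may be $C_4\times C_2\times C_2$) and unsubstantiated: "apply the $O_2(G)$ analysis to $O_2(G)F^*(G)$" has no content in this case because $C_G(O_2(G))\not\le O_2(G)$ when $E(G)\neq 1$, and "then $C$ is nilpotent" is precisely the hard assertion. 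In fact the paper's Step \thref{Sz(8)} eliminates a unique component $L_1$ with $P\cap L_1$ abelian only by \emph{invoking this very lemma} (rank considerations on $G/L_1$ produce a $C_2\times C_2$ quotient, which the lemma forbids), so proving the lemma by claiming such configurations force nilpotency is, on present knowledge, circular. The paper's own proof avoids components altogether: it is a local argument in $\overline{N}=N_G(P)/Z(P)$, where the existence of $H$ with $G/H\cong C_2\times C_2$ forces $[\overline{P},N_{\overline{N}}(\overline{P},\overline{b}_{\overline{P}})]$ to have order at most $2$ in $\overline{P}\cong C_2^3$, so by the coprime decomposition the inertial quotient of the dominated block centralizes a subgroup of order $4$, hence is trivial; block domination and \thref{nilpotency_criterion} then contradict non-nilpotency of $C$.

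There are also unproved reduction steps. Passing to $G/O_{2'}(G)$ is not a legitimate move as stated: $C$ corresponds to a block of $G/O_{2'}(G)$ only when it covers the principal block of $O_{2'}(G)$; otherwise you must use K\"ulshammer--Puig and replace $G$ by a central extension of $G/O_{2'}(G)$ by a cyclic $2'$-group, which changes the later Fitting-subgroup computations. Similarly, your inductive removal of components with $P\cap S=1$ needs the covered block of $\langle S^G\rangle$ to be $G$-stable, but quasiprimitivity is not preserved by these equivalences, and restoring it via Fong--Reynolds passes to a stabilizer that need not retain a quotient isomorphic to $C_2\times C_2$; you flag this as delicate but do not resolve it. Smaller repairable points: in the case $O_2(G)=P$ the conclusion $P\in\Syl_2(G)$ should come from $C_G(O_2(G))\le O_2(G)$ and $O_{2'}(G)=1$ forcing $G$ to have a single block of maximal defect, not from asserting the inertial quotient equals $G/P$; the claim $O_2(G)/\Phi(O_2(G))\cong C_2\times C_2$ for the index-$2$ case needs a computation; and since $P$ has type $\cA$, the component in your first case can only be ${}^2B_2(8)$, not $PSU_3(4)$. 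Even granting all of these repairs, the unhandled abelian-intersection case above means the proposal does not prove the lemma.
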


\begin{proof}
Note that we have $|\Out(P)|_{2'}=21$ and $\Out(P)$ contains a subgroup $C_7 \rtimes C_3$.

Consider the Brauer correspondent $b$ of $C$ in $N:=N_{G}(P)$. In order to use an argument normally applied to blocks with abelian defect groups, we work with $\overline{N}:=N_{G}(P)/Z(P)$. By~\cite[Corollary 4]{ku87} there is a unique block $\overline{b}$ of $\overline{N}$ dominated by $b$ (see~\cite{nt} concerning domination of blocks). By~\cite[Theorem 5.8.10]{nt} $\overline{b}$ must necessarily have defect group $P/Z(P)$. Let $(\overline{P},\overline{b}_{\overline{P}})$ be a $\overline{b}$-subpair, so that $\overline{b}$ has inertial quotient $E:=N_{\overline{N}}(\overline{P},\overline{b}_{\overline{P}})/C_{\overline{N}}(\overline{P})$. Now we cannot have $E=1$, for otherwise $\overline{b}$ would be nilpotent, in which case $l(b)=l(\overline{b})=1$ and $b$ would have trivial inertial quotient by \thref{nilpotency_criterion} (noting that the inertial quotient of $b$ is a subgroup of $C_7 \rtimes C_3$, and so has trivial Schur multiplier), which would in turn imply that $B$ would have to be nilpotent since $B$ is controlled with inertial quotient a subgroup of $C_7 \rtimes C_3$. It follows from~\cite[Theorem 5.2.3]{gor} that $\overline{P}=[\overline{P},N_{\overline{N}}(\overline{P},\overline{b}_{\overline{P}})] \times C_{\overline{P}}(N_{\overline{N}}(\overline{P},\overline{b}_{\overline{P}}))$. Suppose that $H \lhd G$ with $G/H \cong C_2 \times C_2$. Then $N=P(H \cap N)$ and $\overline{N}=\overline{P}(\overline{H} \cap \overline{N})$, and we note that $[\overline{P},N_{\overline{N}}(\overline{P},\overline{b}_{\overline{P}})] = [\overline{P},\overline{P}N_{\overline{N} \cap \overline{H}}(\overline{P},\overline{b}_{\overline{P}})] \leq \overline{N} \cap \overline{H}$. Since $|\overline{N} \cap \overline{H}|=2$, we have $C_{\overline{P}}(N_{\overline{N}}(\overline{P},\overline{b}_{\overline{P}})) \cong C_2 \times C_2$, so that $E$ acts trivially on a subgroup of $\overline{P}$ of order $4$. Hence $E$ must act trivially on $\overline{P}$, and so $E=1$, which we have already established cannot happen.
\end{proof}

\begin{proposition}
\thlabel{main_reduction}
Let $(G,B)$ be a reduced pair where $P$ is a Suzuki $2$-group. Then one of the following:

\begin{enumerate}[(i)]
\item $P \lhd G$;
\item $^2B_2(2^{2n+1}) \leq G \leq \Aut({}^2B_2(2^{2n+1}))$ for some $n \geq 1$ and $B$ is the principal block;
\item $Z(G) \leq [G,G]$ and $PSU_3(2^n) \leq G/Z(G) \leq \Aut(PSU_3(2^n))$ for some $n \geq 2$, and $B$ is a block of maximal defect. 
\end{enumerate}
\end{proposition}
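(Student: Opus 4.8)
The strategy is the standard generalized-Fitting-subgroup analysis for reduced blocks. Write $F^*(G) = E(G)F(G)$. Since $(G,B)$ is reduced, condition (R2) forces $O_{p'}(G) \leq Z(G)$, and any normal subgroup of $G$ covered by a nilpotent block lies in $O_p(G)Z(G)$. First I would analyze $O_2(G)$: it is contained in the defect group $P$ up to conjugacy, hence is a normal subgroup of a Suzuki $2$-group, so Lemma~\thref{subgroup_structure} applies. If $O_2(G)$ is noncentral, then either $Z(P) \leq O_2(G)$ (types $\cB,\cC,\cD$, or type $\cA$ with $|QZ(P)/Z(P)| \geq 4$, or the small case $|Z(P)|=8$), or $O_2(G)$ is abelian of the form $\langle(\alpha,0)\rangle Z(P)$ up to a conjugate. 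The aim is to push towards case (i): if $F^*(G) = O_2(G)$ is the whole story we want $P \lhd G$. The key leverage is that $B$ is controlled (Corollary~\thref{Suzuki_controlled}), so the fusion — and in particular whether $B$ is nilpotent — is detected in $N_G(P)$, and Corollary~\thref{Z(P)central} tells us that for type $\cA$, $Z(P) \leq Z(G)$ already forces nilpotency.

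Next I would bring in the components. Suppose $E(G) \neq 1$ and let $S_1,\dots,S_t$ be the components. The block $B$ covers a block $B_E$ of $E(G)$ with defect group $P \cap E(G)$, and on each component $S_i$ a block $B_{S_i}$ with defect group $P \cap S_i$. By Lemma~\thref{component_not_nilpotent}, if some $B_{S_i}$ is nilpotent then $B_E$ is nilpotent, and then (R2) forces $E(G) \leq O_2(G)Z(G)$, contradicting that $E(G)$ is a product of quasisimple (in particular perfect, non-abelian) groups unless $E(G)=1$. So every $B_{S_i}$ is non-nilpotent, hence has non-trivial defect; $P \cap S_i$ is non-trivial and, being subnormal in $P$, is either a Suzuki $2$-group or a normal subgroup of one. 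Here is where Proposition~\thref{Suzuki_quasisimple} does the heavy lifting: it forces $P \cap S_i \in \Syl_2(S_i)$ and $S_i/Z(S_i) \cong PSU_3(2^{n})$ or ${}^2B_2(2^{2n+1})$. Now a rank/order count: $P$ has $2$-rank $|Z(P)|$-bounded but in fact the relevant bound is that $P$ cannot contain a direct product of two non-trivial Suzuki-type Sylow subgroups (the center $Z(P)$ is too small, $Z(P) = \Omega_1(P)$), so $t = 1$ and $P \cap S_1$ is "most" of $P$. Comparing $|P|$ with $|P \cap S_1|$ and using that $P/(P\cap S_1)$ embeds in $\Out(S_1)$ via $G/S_1C_G(S_1)$, together with Lemma~\thref{rank_out_mult}(ii) and Lemma~\thref{Sz(8)_even_index} to rule out the $C_2 \times C_2$-quotient obstruction in the awkward $|Z(P)|=8$ type-$\cA$ case, I would conclude $P = P \cap S_1 \in \Syl_2(S_1)$, i.e. $P$ is already a full Sylow $2$-subgroup of a group with socle $PSU_3(2^n)$ or ${}^2B_2(2^{2n+1})$.

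Finally I would identify $G$ itself. Since $C_G(F^*(G)) \leq F^*(G)$ and here $F^*(G) = E(G) = S_1$ (the case $E(G)=1$ having been dealt with, landing in (i)), we get $Z(S_1) \leq Z(G)$ and $G/Z(S_1)$ embeds between $S_1/Z(S_1)$ and $\Aut(S_1/Z(S_1))$, i.e. $PSU_3(2^n) \leq G/Z(G) \leq \Aut(PSU_3(2^n))$ or ${}^2B_2(2^{2n+1}) \leq G \leq \Aut({}^2B_2(2^{2n+1}))$. For the Suzuki case, $Z({}^2B_2(2^{2n+1}))=1$ and $\Out$ is cyclic of odd order, and since $P$ is already Sylow the block of maximal defect is the principal block — and one checks (using that the outer part is odd order and the block is controlled with the fusion already realized at Sylow level) that $B$ itself, not merely a Brauer correspondent, is principal, giving (ii). For the unitary case, $B$ has maximal defect since $P$ is Sylow, and the condition $Z(G) \leq [G,G]$ comes from (R2) applied to $O_{p'}(G) \cap Z(S_1) \leq Z(G)$: a central $p'$-part of $G$ not inside $[G,G]$ would give a central factor on which $B$ restricts to a nilpotent block forcing it into $Z(H) \cap [H,H]$, which is exactly the statement; this yields (iii).

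The main obstacle I expect is the rank/order bookkeeping that forces $t=1$ and then $P = P \cap S_1$: one must carefully rule out the possibility that $P$ properly contains $P \cap S_1$ with the extra factor living in $\Out(S_1)$, and the genuinely delicate sub-case is type $\cA$ with $|Z(P)| = 8$ (the Sylow $2$-subgroup of ${}^2B_2(8)$), where $\Out$ of the relevant simple groups can have $2$-rank up to $3$ — this is precisely what Lemma~\thref{Sz(8)_even_index} and Lemma~\thref{rank_out_mult}(ii) are set up to handle, by forbidding the $C_2 \times C_2$ quotient that such an extension would produce.
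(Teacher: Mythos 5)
There is a genuine gap at the point where you invoke Proposition~\thref{Suzuki_quasisimple} to say it ``forces $P \cap S_i \in \Syl_2(S_i)$ and $S_i/Z(S_i) \cong PSU_3(2^n)$ or ${}^2B_2(2^{2n+1})$''. That proposition only applies when the defect group of the component's block is a Suzuki $2$-group or a \emph{nonabelian} normal subgroup of one. Non-nilpotency of $B_{S_i}$ only gives $O_2(S_i) < P \cap S_i$; it does not exclude $P \cap S_i$ being abelian, e.g.\ $P \cap S_i \leq Z(P)$, or (in type $\cA$) $P \cap S_i \cong C_4 \times (C_2)^s$ when $|(P\cap S_i)Z(P)/Z(P)| \leq 2$ (see Lemma~\thref{subgroup_structure}(a)(iii)). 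Handling exactly these abelian possibilities is the bulk of the paper's argument: the ``$t=1$'' step is not a quick rank count on two Suzuki-type Sylow subgroups (the two components need not meet $P$ in Suzuki groups at all), but a case analysis using $Z(P) \leq L_iL_j$, the $2$-rank bounds of Lemma~\thref{rank_out_mult}, and for type $\cA$ a counting argument via the lower bound $|Z(P)\cap Q| \geq 2^{n(k-1)}$; and after $t=1$ one still has to eliminate the configurations $P_1 \leq Z(P)$ and $[P_1Z(P):Z(P)]=2$, the latter requiring, beyond the $|Z(P)|=8$ case you flag, a separate inertial-quotient argument for $|Z(P)|=2^5$ (the inertial quotient sits in $C_{31}\rtimes C_5$; Singer cycles and field automorphisms cannot normalize both $O_2(G)$ and $P_1$, forcing nilpotency). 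Your proposal does not supply any mechanism for these abelian-defect components beyond the $|Z(P)|=8$ type-$\cA$ subcase, so as written the chain ``non-nilpotent component $\Rightarrow$ An's theorem applies $\Rightarrow$ $P\cap S_1$ is Sylow'' fails.

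A secondary, smaller gap: you treat the case $E(G)=1$ as immediately ``landing in (i)'', but $C_G(F^*(G)) \leq F^*(G)$ with $F^*(G)=O_2(G)Z(G)$ does not directly give $P = O_2(G)$. The paper proves this (Step~\thref{has_component}) by passing to $N = C_G(\Omega_1(O_2(G))) \geq P$ and the quotient by $\Omega_1(O_2(G))$, where the defect group becomes abelian, and showing a component of the quotient would lift to a component of $G$; some such argument is needed and is missing from your outline. The remaining parts of your plan (covering/Fong-type use of (R1)--(R2), identification of $G$ from $F^*(G)=L_1Z(G)$, and extraction of $Z(G)\leq [G,G]$ and maximal defect in case (iii)) do match the paper's route.
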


We prove \thref{main_reduction} in a series of steps.

Let $(G,B)$ be reduced, and suppose that $O_2(G) \neq P$, so $O_2(G)$ is a proper subgroup of $P$.

\begin{step}
\thlabel{has_component}
$G$ has at least one component.
\end{step}

\begin{proof}
Suppose that $E(G) = 1$, so that $F^*(G)=F(G)=O_2(G)Z(G)$. Then $C_G(O_2(G))=C_G(F^*(G)) \leq F^*(G)=O_2(G)Z(G)$. In particular $Z(P) < O_2(G)$, so that $\Omega_1(O_2(G)) = Z(P)$ and $P \leq C_G(\Omega_1(O_2(G)))=:N \lhd G$.  

Let $B_N$ be the (unique) block of $\cO N$ covered by $B$, and note that it has $P$ as a defect group. Write $Z=\Omega_1(O_2(G))$ and $\overline{G}=G/Z$. Note that $O_{2'}(\overline{N}) = O_{2'}(G)Z/Z$, so in particular $O_{2'}(\overline{N}) \leq Z(\overline{N})$. Let $B_{\overline{N}}$ be the unique block of $\overline{N}$ corresponding to $B_N$, so that $B_{\overline{N}}$ has defect group $\overline{P}$, an abelian group. Suppose that $E(\overline{N})=1$. Then 
$F^*(\overline{N})=O_2(\overline{N})O_{2'}(\overline{N})$ and $\overline{P} \leq C_{\overline{N}}(O_2(\overline{N})) \leq O_2(\overline{N})O_{2'}(\overline{N})$. Hence $P=O_2(N)$ and $P \lhd G$, contradicting our assumption. So $E(\overline{N}) \neq 1$. Let $Y$ be a component in $E(\overline{N})$ and let $X$ be the preimage of $Y$ in $N$, so that $X$ is a central product of a component $S$ of $N$ with $Z$. Hence $S$ is a component of $G$, a contradiction. 
\end{proof}

\begin{step}
\thlabel{components}
Write $1 \neq E(G)=L_1 \cdots L_t \lhd G$, where the $L_i$ are the components of $G$. Write $B_E$ for the unique block of $E(G)$ covered by $B$, and note that this has defect group $P \cap E(G)$. Since each $L_i$ is normal in $E(G)$, we may choose a block $B_i$ of $L_i$ covered by $B_E$ with defect group $P \cap L_i$. Then no $B_i$ is nilpotent. In particular, for each $i$ we have $O_2(L_i) < P \cap L_i$.
\end{step}

\begin{proof}
See \thref{component_not_nilpotent}, noting that blocks with central defect group are nilpotent.
\end{proof}

\begin{step}
\thlabel{E(G)inT}
$E(G) \leq T:= \langle P^g:g \in G \rangle$.
\end{step}

\begin{proof}
It follows from \thref{components} that $E(G)=\langle (P \cap E(G))^g : g \in G \rangle \leq T$.
\end{proof}

\begin{step}
\thlabel{type_AB_unique_comp}
If $P$ is of type $\cA$ with $|Z(P)|=8$ or type $\cB$ with $|Z(P)|=4$, then there is a unique component.
\end{step}

\begin{proof}
Note that $P \cap E(G) \lhd P$, and that $P \cap E(G)$ is a central product of the $P \cap L_i$. Suppose $P$ is of type $\cA$. If $t \geq 2$, then $P \cap L_i$ must be cyclic for some $i$, in which case $B_i$ is nilpotent, contradicting \thref{components}. Hence $t=1$ in this case. Similarly if $P$ is of type $\cB$, then we have $t \leq 2$, and if $t=2$, then $P \leq L_1L_2$ with $(P \cap L_i)Z(P)/Z(P) \cong C_2 \times C_2$ for each $i$. By \thref{small_cases} there is only one possibility for $P$, and it is easily seen that this cannot be written as a central product of two such groups.
\end{proof}

\begin{step} 
\thlabel{comp_defect_no_in_Z(P)}
If $L_i$ is a component with $P \cap L_i \leq Z(P)$, then $L_i$ is the unique component and $[P:Z(P)]=16$ (in which case $|Z(P)|=4$) or $[P:Z(P)]=8$ (in which case $|Z(P)|=8$).
\end{step}

\begin{proof}
Consider $L_i$ with $P \cap L_i \leq Z(P)$. Then $P \leq N_G(L_{i})$ for each $i$, and so $L_{i} \lhd T$. 

Consider $C_T(L_i) \lhd T$, and let $C$ be a block of $C_T(L_i)$ covered by the unique block $B_T$ of $T$ covered by $B$. Note that $B_T$ has defect group $P$, and we may choose $C$ with defect group $C_P(L_i)$. Suppose that $|C_P(L_i)Z(P)/Z(P)| > 2$. Then by \thref{subgroup_structure} $Z(P) \leq C_P(L_i)$, so that $P \cap L_i = Z(P) \cap L_i \leq Z(L_i)$, contradicting \thref{components}. Hence $|C_P(L_i)Z(P)/Z(P)| \leq 2$. Suppose that $[P:Z(P)] \geq 2^5$. Then we have $P/C_P(L_i)Z(P) \geq 2^4$, so that $P/C_P(L_i)Z(P)$ has $2$-rank at least $4$. But $P/C_P(L_i)Z(P)$ is isomorphic to a section of $\Out(L_i/Z(L_i))$, which is impossible by \thref{rank_out_mult}. We are left with $[P:Z(P)] \leq 16$. This means $P$ is either of type $\cA$ with $|Z(P)|=[P:Z(P)]=8$ or $P$ is of type $\cB$ with $[P:Z(P)]=16$ and $|Z(P)|=4$. In either case $L_i$ must be the unique component by \thref{type_AB_unique_comp}.
\end{proof}

\begin{step} 
\thlabel{components_normal}
$P \cap L_i \lhd P$ for each $i$.
\end{step}

\begin{proof}
By \thref{comp_defect_no_in_Z(P)} we may assume $(P \cap L_i)Z(P) > Z(P)$.

Let $L_{i_1}, \ldots , L_{i_r}$ be the $G$-orbit containing $L_i$, and suppose that $r \geq 2$. For the moment write $L=L_{i_1} \cdots L_{i_r}$. Then $|(P \cap L)Z(P)/Z(P)|>2$, so by \thref{subgroup_structure} $Z(P) \leq L$ and $Z(P \cap L) = Z(P)$.

Suppose first that $Z(P) \leq Z(L)$, so that (since $Z(P \cap L) = Z(P)$ and $O_2(Z(L)) \leq P$) we have $Z(P)=O_2(Z(L))$. Note that $|(P \cap L_{i_j})Z(L)/Z(L)| > 1$ for each $i_j$. Since $P/Z(P)$, and so $PZ(L)/Z(L)$, is abelian, it follows that $PZ(L)/Z(L)$ normalizes each $L_{i_j}Z(L)/Z(L)$, as $L/Z(L)$ is the direct product of the nonabelian simple groups $L_{i_j}Z(L)/Z(L)$. Hence $P$ normalizes each $P \cap L_{i_j}$. Suppose that $Z(P) \not\leq Z(L_{i_1} \cdots L_{i_r})$. Then each $P \cap L_{i_j}$ contains an involution in $P$ non-central in $L_{i_j}$. It follows that $P$ normalizes each $L_{i_j}$, and so normalizes $P \cap L_{i_j}$.
\end{proof}

\begin{step}
\thlabel{one_component}
$G$ has only one component, i.e., $t=1$.
\end{step}

\begin{proof}
By \thref{comp_defect_no_in_Z(P)} for each $i$ we have $(P \cap L_i)Z(P) > Z(P)$ and by \thref{components_normal} $P$ normalizes each $P \cap L_i$. Suppose that $P$ is not of type $\cA$. Then by \thref{subgroup_structure} $Z(P)$ is contained in a defect group of $B_i$ for each $i$. Hence if $t \geq 2$, then $Z(P) \leq Z(L_i)$ for each $i$ (since each $L_i$ contains all of the involutions in $P$). By \thref{rank_out_mult}, the $2$-rank of $Z(L_i)$ is at most two, so that $|Z(P)|=4$ and $|P/Z(P)|=16$. But then $L_i$ is the unique component by \thref{type_AB_unique_comp}.

Suppose that $P$ is of type $\cA$ and that $t \geq 2$. Write $[P:Z(P)]=|Z(P)|=2^m$. By \thref{subgroup_structure} for all $i \neq j$ we have $Z(P) \leq L_iL_j$. If $t > 2$, then this means that $Z(P) \leq Z(L_i)$ for each $i$, which is impossible by \thref{rank_out_mult} since $m \geq 3$. Hence $t=2$. Note that we have $Z(P) \leq E(G)$.

Suppose that $O_2(G) \not\leq E(G)$. Then possesses an element of order four outside of $E(G)$. So $(P \cap L_i)O_2(G) \lhd P$ contains two distinct elements of order $4$ and by \thref{subgroup_structure} $Z(P) \leq (P \cap L_i)O_2(G)$ for each $i$. But $Z(P) \leq E(G)$, so $Z(P) \leq L_i$ for each $i$, and it follows that $Z(P) \leq Z(E(G))$. Hence $Z(P) \lhd G$ and we may consider $C_G(Z(P)) \lhd G$. By \thref{Z(P)central} the (unique) block of $C_G(Z(P))$ covered by $B$ is nilpotent, a contradiction. Hence $O_2(G) \leq E(G)$.

If $[(P \cap L_i)Z(P):Z(P)] >2$ for some $i$, then by \thref{subgroup_structure} $Z(P) \leq L_i$, and as above we must have $Z(P) \leq Z(L_j)$ for each $j$, which again is impossible as $m \geq 3$ and $Z(L_j)$ has $2$-rank at most $2$. Hence $P \cap L_i \cong C_4 \times (C_2)^{s_i}$ for some $s_i$.

Taking stock, we have $F^*(G)/Z(G)=F^*(G/Z(G)) \cong L_1/Z(L_1) \times L_2/Z(L_2)$. Since $C_{G/Z(G)}(F^*(G/Z(G))) \leq F^*(G/Z(G))$ we have that $G/F^*(G)$ embeds in $\Out(F^*(G/Z(G)))$, which is isomorphic to a subgroup of $\Out(L_1/Z(L_1)) \wr C_2$ or $\Out(L_1/Z(L_1)) \times \Out(L_2/Z(L_2))$, depending on whether or not $L_1 \cong L_2$. By~\cite[Theorem 15.1]{alp86}) our assumption that $B$ is reduced implies that $PN=G$ for any subgroup $N$ of $G$ of index $2$. But \thref{components_normal} implies that no element of $P$ can switch the two components, so each component is normal in $G$ and $G/F^*(G)$ embeds in $\Out(L_1/Z(L_1)) \times \Out(L_2/Z(L_2))$. Counting elements of order $4$, since $\Out(L_i)$ has $2$-rank at most $3$, this means $m \leq 8$. By definition we cannot have $m=8$. Since blocks with defect group $C_4 \times C_2$ are nilpotent, we must have $|(Z(P) \cap L_i)/O_2(L_i)| \geq 2^3$ (we are using the fact that by~\cite{wa94} a $p$-block $b$ of a finite group $X$ is nilpotent if and only if the corresponding block of $X/O_p(Z(X))$ is nilpotent). Hence $6 \leq m \leq 7$.

If $m=6$, then we must have $|(Z(P) \cap L_i)/O_2(L_i)| = |Z(P) \cap L_i| = 2^3$ for each $i$. But in the notation of \thref{subgroup_structure} $k=3$ and $n=2$, so that $|L_i \cap Z(P)| \geq 2^{(k-1)n}=2^4$, a contradiction. If $m=7$, then $k=7$ and $n=1$, so that by \thref{subgroup_structure} we have $|L_i \cap Z(P)| \geq 2^{(k-1)n}=2^6$ for each $i$, a contradiction. 

Hence $t=1$ and we are done.
\end{proof}

\begin{step}
\thlabel{Sz(8)}
If $P$ is of type $\cA$ with $|Z(P)|=8$, then $P \leq {}^2B_2(2^{3}) \leq G \leq \Aut({}^2B_2(2^{3}))$.
\end{step}

\begin{proof}
$G$ has a unique component $L_1$, and the block $B_1$ of $L_1$ covered by $B$ has defect group $P_1:=P \cap L_1$. 

If $|P_1Z(P)/Z(P)| > 2$, then by \thref{subgroup_structure} we have $Z(P) \leq P_1$. By \thref{Suzuki_quasisimple} $L_1 \cong {}^2B_2(2^{8})$ and $P_1=P$. It follows that $O_2(G)=1$ and $F^*(G)=L_1$, so since $C_G(F^*(G)) \leq F^*(G)$ the result follows in this case.

Suppose that $P_1 \leq Z(P)$. Then $P_1 \cong C_2 \times C_2$ or $C_2 \times C_2 \times C_2$ (it cannot be $C_2$ as $B_1$ is not nilpotent). Suppose first that $P_1 \cong C_2 \times C_2$. Then $O_2(L_1)=1$ since otherwise $B_1$ would be nilpotent. Hence $F^*(G)=(L_1 \times O_2(G))O_{2'}(G)$. We have $F^*(G/O_{2'}(G)) \cong L_1/Z(L_1) \times O_2(G)$, where $O_2(G)$ is cyclic, and so $G/F^*(G)$ is isomorphic to a subgroup of $\Out(O_2(G) \times L_1/Z(L_1)) \cong \Out(L_1/Z(L_1))$. If $O_2(G) \cong C_2$ or is trivial, then $G/L_1$ has $2$-rank at least three, and so by \thref{rank_out_mult} $G/L_1$ has a normal subgroup with quotient $C_2 \times C_2$, which cannot happen by \thref{Sz(8)_even_index}. If $O_2(G) \cong C_4$, then $G$, and so $P$, has $O_2(G)$ as a direct factor. But $P$ cannot be factorised in this way, a contradiction. If $P_1 \cong C_2 \times C_2 \times C_2$. Then $O_2(G) \leq L_1$, and again $G/L_1$ has $2$-rank three, so that $G/L_1$ has a normal subgroup with quotient $C_2 \times C_2$, which cannot happen by \thref{Sz(8)_even_index}.

Finally, suppose that $|P_1Z(P)/Z(P)| = 2$. Now $P$ has no normal subgroup isomorphic to $C_4$ or $C_4 \times C_2$, so $P_1 \cong C_4 \times C_2 \times C_2$. Further $O_2(G)=1$. We have $P/P_1 \cong C_2 \times C_2$ and $F^*(G)=L_1Z(G)$. Since $G/F^*(G)$ is isomorphic to a subgroup of $\Out(L_1/Z(L_1))$, it is solvable (since the outer automorphism group of a nonabelian simple group is always solvable), and so is $G/L_1$. By~\cite[Lemma 2.4]{ar21} $PL_1/L_1$ is a Sylow $2$-subgroup of $G/L_1$. Since $PL_1/L_1$ is an abelian, $G$ has normal subgroups $G_1$, $G_2$ such that $|G/G_1|$ is odd and $G_2 \leq G_1$ with $G_1/G_2 \cong C_2 \times C_2$. Now $B$ covers a unique block $C$ of $G_1$, and this also has defect group $P$. But $C$ is not nilpotent, so by \thref{Sz(8)_even_index} this configuration cannot arise.
\end{proof}

\begin{step}
\thref{main_reduction} holds.
\end{step}

\begin{proof}
By the previous steps, if $P \neq O_2(G)$, then we have $F^*(G) = L_1O_2(G)Z(G)$. Recall that $B_1$ is the block of $L_1$ covered by $B$ and that $P_1 : = P \cap L_1$ is a defect group of $B_1$. 

\emph{Suppose that $P_1Z(P)>Z(P)$ and that $P$ is not of type $\cA$}. Then by \thref{subgroup_structure} $Z(P) < P_1$, so $P_1$ is a nonabelian normal subgroup of $P$, and by \thref{Suzuki_quasisimple} $L_1/Z(L_1) \cong PSU_3(2^n)$ for some $m$. In this case $O_2(L_1)=1$. Since $P_1$ contains all involutions in $P$, this means that $O_2(G)=1$ and $F^*(G)=L_1Z(G)$. Since $C_G(F^*(G)) \leq F^*(G)$, the result follows in this case. 

\emph{Suppose that $[P_1Z(P):Z(P)]>2$ and that $P$ is of type $\cA$}. Then by \thref{subgroup_structure} $Z(P) \leq P_1$ with $[P:Z(P)]>2$, so $P_1$ is a nonabelian normal subgroup of $P$, and by \thref{Suzuki_quasisimple} $L_1 \cong {}^2B_2(2^{2n+1})$ for some $n$. Since $P_1$ contains all involutions in $P$, this means that $O_2(G)=1$ and $F^*(G)=L_1$, noting that $Z(G)=1$ as $\Aut(L_1)$ has trivial Schur multiplier in this case. Since $C_G(F^*(G)) \leq F^*(G)$, the result follows in this case. 

\emph{Suppose that $[P_1Z(P):Z(P)]=2$ and that $P$ is of type $\cA$}. Write $|Z(P)|=[P:Z(P)]=2^m$. In this case $P_1$ is $C_4 \times (C_2)^s$ for some $s$. 

Write $H=C_G(\Omega_1(O_2(G))) \leq G$. Then $P \leq H$ and $B$ covers a unique block $B_H$ of $H$, and this has defect group $P$. We have $|O_2(G)Z(P)/Z(P)| \leq 2$, for otherwise by \thref{subgroup_structure} $Z(P) \leq O_2(G)$ and $Z(P) \leq Z(H)$, which would then imply that $B_H$ is nilpotent by \thref{Z(P)central}. Say $O_2(G) \cong (C_4)^i \times (C_2)^u$ for some $u,i$, where $i \in \{ 0,1 \}$.

Note that $L_1 \leq H$, that $O_2(H)=O_2(G)$, and that $O_{2'}(G)=O_{2'}(Z(G))=O_{2'}(H) \leq Z(H)$, so that $F^*(H)=F^*(G)$. Hence we have that $H/F^*(H)$ embeds in $\Out(L_1)$. By \thref{rank_out_mult} $\Out(L_1)$ has $2$-rank at most three, so $m \leq 5$, with equality only when $i=1$. By \thref{Sz(8)} we may assume $m \neq 3$. Since $m$ cannot be $4$, this leaves $m=5$. Since in this case $i=1$, we have $|P_1O_2(G)Z(P)/Z(P)| >2$. Hence by \thref{subgroup_structure} we have $Z(P) \leq P_1O_2(G) \leq H$. Also note that $O_2(G)$ is not contained in $L_1$. By \thref{type_A_auts} the inertial quotient of $B_H$ is a subgroup of $C_{31} \rtimes C_5$. Elements of order $31$ (corresponding to Singer cycles on $\FF_{2^5}$) act transitively on $Z(P)$ and on $P/Z(P)$, hence the inertial quotient of $B_H$ cannot contain an element of order $31$. Elements of order $5$ act as field automorphisms of $\FF_{2^5}$ on $Z(P)$ and on $P/Z(P)$. Since $5$ is prime (so that the fixed point space of a nontrivial field automorphism has order $2$), the inertial quotient cannot normalize both $O_2(G)$ and $P_1$ and so cannot contain an element of order $5$. Hence the inertial quotient of $B_H$ is trivial and $B_H$ is nilpotent. Hence we have ruled out this configuration.

\emph{Suppose that $P_1 \leq Z(P)$}. By \thref{comp_defect_no_in_Z(P)} and \thref{Sz(8)} we must have that $|Z(P)|=4$ and $[P:Z(P)]=16$, so that $P$ is of type $\cB$. But we must have $O_2(G) \leq L_1$ and $F^*(G)=L_1$, so $G/L_1$ has $2$-rank four, contradicting \thref{rank_out_mult}. Hence this case cannot occur.
\end{proof}

\bigskip

\textbf{Proof of \thref{main_theorem}}
By \thref{reduced_block} and \thref{main_reduction} it suffices to consider blocks with $P \lhd G$. In this case the result follows from the main result of~\cite{ku85} (see also~\cite[Theorem 6.14.1]{lin2}).\hfill $\Box$

\bigskip

As an illustration of \thref{main_theorem}, recalling \thref{small_cases}, we list the Morita equivalence classes for the Suzuki groups of order $64$. Here $B_0(-)$ denotes the principal block.

\begin{corollary}
Let $B$ be a block of a finite group $G$ with defect group $P$. If $P$ is a Suzuki $2$-group of order $64$, then $P$ is of type $\cA$ or $\cB$.
\begin{enumerate}[(a)]
\item If $P$ is a Suzuki group of type $\cA$, then $P$ is isomorphic to a Sylow $2$-subgroup of ${}^2B_2(8)$ and $B$ is basic Morita equivalent to one of the following:
\begin{enumerate}[(i)]
\item $\cO P$
\item $\cO (P \rtimes C_7)$
\item $\cO (P \rtimes C_3)$
\item $\cO (P \rtimes (C_7 \rtimes C_3))$
\item $B_0(\cO ({}^2B_2(8)))$
\item $B_0(\cO (\Aut({}^2B_2(8))))$
\end{enumerate}
\item If $P$ is a Suzuki group of type $\cB$, then $P$ is isomorphic to a Sylow $2$-subgroup of $PSU_3(4)$ and $B$ is basic Morita equivalent to one of the following:
\begin{enumerate}[(i)]
\item $\cO P$
\item $\cO (P \rtimes C_3)$
\item $B_0(\cO PSU_3(4))$
\end{enumerate}
\end{enumerate}
\end{corollary}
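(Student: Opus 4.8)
The plan is to apply \thref{main_theorem} directly, using \thref{small_cases} to pin down which Suzuki $2$-groups of order $64$ actually occur, and then to enumerate the possibilities coming from cases (i)--(iii) of \thref{main_theorem} for each of the two isomorphism types. By \thref{small_cases} the only Suzuki $2$-groups of order $2^6$ are a Sylow $2$-subgroup $P_A$ of ${}^2B_2(8)$ (type $\cA$, with $|Z(P_A)|=8$) and a Sylow $2$-subgroup $P_B$ of $PSU_3(4)$ (type $\cB$, with $|Z(P_B)|=4$); in particular there are no groups of type $\cC$ or $\cD$ of this order, which gives the opening sentence of the corollary.

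For the type $\cA$ case, by \thref{main_theorem} $B$ is basic Morita equivalent either to a block of $P_A \rtimes \hat{E}$ as in (i), to $B_0(\cO H)$ for ${}^2B_2(8) \leq H \leq \Aut({}^2B_2(8))$ as in (ii), or to a block of maximal defect of a group $H$ with $PSU_3(2^n) \leq H/Z(H) \leq \Aut(PSU_3(2^n))$; the last possibility cannot arise since a Sylow $2$-subgroup of $PSU_3(2^n)$ is of type $\cB$, not type $\cA$. For (ii), one checks $|\Aut({}^2B_2(8)):{}^2B_2(8)|=3$, so the only options are $B_0(\cO\, {}^2B_2(8))$ and $B_0(\cO\,\Aut({}^2B_2(8)))$, giving (v) and (vi); these are genuinely distinct Morita classes (e.g. they have different numbers of irreducible characters or simple modules). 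For (i), by \thref{type_A_auts} the odd part of $\Aut(P_A)/O_2(\Aut(P_A))$ is $C_7 \rtimes C_3$, so the odd order subgroups $E$ of $\Aut(P_A)$ are, up to conjugacy, $1$, $C_3$, $C_7$ and $C_7 \rtimes C_3$. Since each such $E$ has cyclic Sylow subgroups for all primes, $H^2(E,k^\times)$ is trivial (as noted in Section \ref{Controlled}), so the central extension $\hat{E}$ contributes nothing new and we may take $\hat{E}=E$, yielding the four blocks $\cO P_A$, $\cO(P_A\rtimes C_3)$, $\cO(P_A\rtimes C_7)$ and $\cO(P_A\rtimes(C_7\rtimes C_3))$ of (i)--(iv). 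These four are pairwise non-Morita-equivalent since they have distinct inertial quotients (equivalently distinct Frobenius categories, which basic Morita equivalence preserves), and none of them is Morita equivalent to the principal blocks in (v), (vi), again by comparing inertial quotients or $l(B)$.

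For the type $\cB$ case the argument is parallel: case (ii) of \thref{main_theorem} is vacuous since a Sylow $2$-subgroup of ${}^2B_2(2^{2n+1})$ is of type $\cA$, so $B$ is basic Morita equivalent either to a block of $P_B\rtimes\hat E$ or to a block of maximal defect of $H$ with $PSU_3(2^n)\leq H/Z(H)\leq \Aut(PSU_3(2^n))$, the latter forcing $n=2$ and, after checking that the relevant block is the principal block with $Z(H)=1$ and $|\Aut(PSU_3(4)):PSU_3(4)|$ even so the only odd-index option is $H=PSU_3(4)$, giving $B_0(\cO\,PSU_3(4))$. The subgroup structure of $\Aut(P_B)$ of odd order that acts faithfully needs to be determined — this is the one point not handed to us by an earlier result, since we only have \thref{type_A_auts} for type $\cA$ — but one reads off from the description of $\cB(2,\theta,\epsilon)$ (or directly from the embedding in $PSU_3(4)$) that the odd part of the relevant automorphism group is $C_3$, so the only blocks from case (i) are $\cO P_B$ and $\cO(P_B\rtimes C_3)$, and together with $B_0(\cO\,PSU_3(4))$ these three are distinct by comparing inertial quotients. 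The main obstacle is this last bookkeeping: correctly identifying the odd order automorphism groups and Schur multipliers so as to rule out spurious extra classes and confirm the listed ones are inequivalent; everything else is a direct unwinding of \thref{main_theorem} and \thref{small_cases}.
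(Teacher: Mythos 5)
Your type $\cA$ analysis and the general strategy (specialise \thref{main_theorem} using \thref{small_cases}, kill case (iii) resp.\ (ii) by the isomorphism type of the defect group, use triviality of $H^2(E,k^\times)$ for the odd $E$ that arise, and enumerate $H$ with ${}^2B_2(8)\leq H\leq \Aut({}^2B_2(8))$) is exactly the intended unwinding of the theorem. The problem is in the type $\cB$ case, at precisely the step you flag as the crux: the claim that the odd part of $\Aut(P_B)$ is $C_3$ is false. In $PSU_3(4)$ (which equals $SU_3(4)$ since $\gcd(3,q+1)=1$) the normalizer of a Sylow $2$-subgroup is the Borel subgroup $P_B\rtimes C_{15}$, and the torus $C_{15}$ acts faithfully on $P_B$ (its kernel would have to centralise $P_B$, and no nontrivial semisimple element of $PSU_3(4)$ has centralizer of order divisible by $2^6$; equivalently, the inertial quotient of $B_0(\cO\,PSU_3(4))$ is $N_G(P)/PC_G(P)\cong C_{15}$, which embeds in $\Out(P_B)$ and lifts to an odd-order subgroup of $\Aut(P_B)$ by Schur--Zassenhaus). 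So $\Aut(P_B)$ contains odd-order subgroups $C_5$ and $C_{15}$, not just $C_3$.

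This is not a cosmetic slip: with the correct automorphism group, your own enumeration of case (i) of \thref{main_theorem} produces the further candidates $\cO(P_B\rtimes C_5)$ and $\cO(P_B\rtimes C_{15})$ (each of $P_B\rtimes C_5$, $P_B\rtimes C_{15}$ is a single block with defect group $P_B$, since $C_{G}(P_B)=Z(P_B)$), and your argument neither lists these nor shows they are basic Morita equivalent to one of the three blocks in part (b). Such an identification cannot be made for $C_5$ by your own distinguishing principle, since basic Morita equivalence preserves the Frobenius category and hence the inertial quotient, and $C_5$ is not the inertial quotient of $\cO P_B$, $\cO(P_B\rtimes C_3)$ or $B_0(\cO\,PSU_3(4))$. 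So the derivation of part (b) as written does not go through; the bookkeeping you identified as the main obstacle is exactly where it breaks. (A secondary, minor point: your proposed invariants for distinguishing the normal-defect blocks from the principal blocks do not all work — e.g.\ $\cO(P\rtimes C_7)$ and $B_0(\cO\,{}^2B_2(8))$ have the same inertial quotient, the same $l(B)=7$, and the same $k_h$ by \thref{number_irred}(a) — but the corollary does not assert pairwise inequivalence, so this part of your argument is superfluous rather than damaging.)
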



\section{Trivial intersection of Suzuki Sylow $2$-subgroups}
\label{TI}

A subgroup $P$ of a finite group $G$ is trivial intersection (TI) if for all $g \in G \setminus N_G(P)$ we have $P^g \cap P = 1$. Inspired by~\cite{bl85} we deduce the following from \thref{main_reduction}.

\begin{corollary}
\thlabel{TIlabel}
Let $G$ be a finite group with Sylow $2$-subgroup $P$ that is a Suzuki $2$-group and suppose that $O_{2'}(G)=1$. Then $P$ is trivial intersection in $G$.
\end{corollary}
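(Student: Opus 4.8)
The plan is to reduce to a situation where the structure theory from \thref{main_reduction} applies, even though a priori $P$ need not be the defect group of a block with the reduction properties (R1), (R2). First I would let $G$ be a minimal counterexample: a finite group with $O_{2'}(G)=1$, Sylow $2$-subgroup $P$ a Suzuki $2$-group, and $P$ not TI. The goal is to locate inside $G$ a reduced pair $(\bar{G},\bar{B})$ with defect group isomorphic to $P$ (or to a nonabelian normal subgroup of $P$) and then read off from \thref{main_reduction} what $\bar{G}$ looks like; in cases (ii) and (iii) the relevant simple group has TI Sylow $2$-subgroups (this is the classical fact of Suzuki~\cite{su64} cited in the introduction), and in case (i) the defect group is normal, which forces $P \lhd G$ and hence $P$ trivially TI. The contradiction would come from showing the non-TI hypothesis is incompatible with every outcome.

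The technical heart is the passage from ``$G$ with Sylow $P$'' to ``a reduced block with defect group $P$''. Here I would take $B$ to be the principal block of $\cO G$, which has $P$ as defect group, and apply \thref{reduced_block} to obtain a basic-Morita-equivalent reduced block $C$ of $\cO H$ with defect group $P_H \cong P$; since basic Morita equivalence preserves the fusion system (and the principal block's fusion system is $\mathcal{F}_P(G)$), the Frobenius categories match up. The subtlety is that TI-ness of $P$ in $G$ is a statement about the fusion system: $P$ is TI in $G$ if and only if the only $\mathcal{F}_P(G)$-essential-type overlaps are the trivial ones — more precisely, a result in the style of~\cite{bl85} shows that a Sylow $2$-subgroup $P$ with $O_{2'}(G)=1$ fails to be TI only if the fusion system is genuinely nontrivial in a way detected by $\bar{G}$. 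So I would argue: if $P$ is not TI in $G$, then $\mathcal{F}_P(G) \neq \mathcal{F}_{P\rtimes E}(P\rtimes E)$ for any $2'$-group $E$ acting on $P$ — but a controlled fusion system (which $\mathcal{F}$ is, by \thref{Suzuki_controlled}, since $P$ is resistant) \emph{is} of exactly this form, so $P$ being TI is equivalent to... this needs care: resistance gives that the fusion system is $P\rtimes E$, and then TI-ness is a further condition. Instead the cleaner route is: apply the main reduction to $(H,C)$; in outcome (i) $P_H \lhd H$, so the fusion system of $C$, hence of $B$, hence of $G$, is nilpotent-by-$E$ on a group with normal Sylow $2$, which forces $P$ TI in $G$ (normal Sylow is TI); in outcomes (ii), (iii) the fusion system of $C$ is that of the principal block of an almost simple group with socle $^2B_2(2^{2n+1})$ or $PSU_3(2^n)/Z$, and since the simple group has TI Sylow $2$-subgroup~\cite{su64}, so does the almost simple group, hence so does $\mathcal{F}_P(G)$, hence $P$ is TI in $G$ (using that TI-ness for $O_{2'}(G)=1$ is a fusion-theoretic property).

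The main obstacle I expect is precisely justifying that ``$P$ TI in $G$'' is determined by $\mathcal{F}_P(G)$ when $O_{2'}(G)=1$, i.e. bridging between the abstract fusion system and an honest subgroup-intersection statement, and then checking that TI-ness is inherited correctly across the almost-simple extensions in outcomes (ii) and (iii) (one must know not just that $^2B_2(q)$ has TI Sylow $2$, but that the full automorphic extensions do too — field and diagonal automorphisms normalise a Sylow $2$-subgroup up to conjugacy, so this should be routine once set up, but it must be stated). A clean way to handle the bridge is to use that $\mathcal{F}_P(G)$ controls fusion of $2$-subgroups by Alperin's fusion theorem, so $P \cap P^g \neq 1$ for some $g \notin N_G(P)$ produces a nontrivial $\mathcal{F}$-isomorphism between proper-ish subgroups realising the overlap, and conversely; combining this with \thref{Suzuki_controlled} (every relevant block is controlled, so $\mathcal{F}$ is controlled by $N_G(P)$) reduces everything to $N_G(P)$ and to the three explicit possibilities, where TI-ness is visible. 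I would then conclude that a minimal counterexample cannot exist.
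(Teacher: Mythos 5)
Your instinct that the ``bridge'' is the main obstacle is right, but that obstacle cannot be overcome: TI-ness of a Sylow $2$-subgroup is \emph{not} determined by the fusion system $\mathcal{F}_P(G)$, even under the hypothesis $O_{2'}(G)=1$. A concrete counterexample to the key claim in your proposal is $G=\PSL_2(13)$: here $P$ is a Klein four group, $O_{2'}(G)=1$, and $\mathcal{F}_P(G)=\mathcal{F}_P(A_4)$ (the fusion system of a group with \emph{normal} Sylow $2$-subgroup, and in particular a controlled one), yet $P$ is not TI in $G$ --- the centralizer of an involution has order $12$, so $G$ has $91$ involutions, while $N_G(P)\cong A_4$ gives $91$ Sylow $2$-subgroups each containing three involutions, so distinct Sylow $2$-subgroups must intersect nontrivially. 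The same example shows that your treatment of outcome (i) fails as stated: knowing that the fusion system is that of a block with normal defect group does not force $P\lhd G$. Since a basic Morita equivalence transports only block-theoretic and fusion-theoretic data and nothing about how $P$ sits inside the original group, your passage from $(G,B_0(\cO G))$ to the reduced pair $(H,C)$ of \thref{reduced_block} discards exactly the information the corollary is about, and neither Alperin's fusion theorem nor controlledness (\thref{Suzuki_controlled}) recovers it. (It may well be that for Suzuki $2$-groups TI-ness is ultimately forced by the fusion data, but that is essentially the statement being proved, so it cannot be an input.)

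The argument can be repaired by not invoking \thref{reduced_block} at all: one checks that $(G,B_0(\cO G))$ is \emph{already} a reduced pair, so that \thref{main_reduction} applies to $G$ itself. Indeed (R1) is automatic for principal blocks, since the block of $N\lhd G$ covered by $B_0(\cO G)$ is $B_0(\cO N)$, which is $G$-stable; and (R2) holds because $B_0(\cO N)$ is nilpotent precisely when $N$ is $2$-nilpotent, and $O_{2'}(N)\leq O_{2'}(G)=1$ then forces $N\leq O_2(G)$. Now the three outcomes of \thref{main_reduction} describe the actual group $G$: in (i) $P\lhd G$, and in (ii) and (iii) the groups listed have TI Sylow $2$-subgroups (Suzuki's theorem for the simple groups, and the overgroups cause no harm since any $G$-conjugate of $P$ lies in the socle and is therefore a socle-conjugate of $P$). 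Hence $P$ is TI in $G$, with no fusion-theoretic bridge needed. This is the route the paper takes.
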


\begin{proof}
Consider the principal block of $G$. Since the principal block of a group $H$ is nilpotent precisely when $H$ is $p$-nilpotent (that is, $[H:O_{p'}(H)]=|H|_p$), and since $O_{2'}(G)=1$, we have that $G$ satisfies (R2) of \thref{reduced_block}. Condition (R1) is automatically satisfied for principal blocks, so the principal block of $G$ is reduced. The result then follows from \thref{main_reduction} since all blocks listed there are TI.
\end{proof}


\section{Irreducible characters}
\label{irreducibles}

Morita equivalence of $\cO$-blocks of finite groups preserves numbers of irreducible characters of each height, hence \thref{main_theorem} may be used prove facts about these quantities.

Write $\Irr(B)$ for the set of irreducible characters of $G$ belonging to $B$. Writing $P$ for a defect group of $B$ and $|P|=p^d$, the height $h=h(\chi)$ of $\chi \in \Irr(B)$ is the non-negative integer $h$ such that $p^h[G:P]_p=\chi(1)_p$. Write $\Irr_h(B)$ for the set of irreducible characters in $B$ of height $h$ and $k_h(B)=|\Irr_h(B)|$.

\begin{proposition}
\thlabel{number_irred}
Let $B$ be a block of a finite group $G$ with defect group $P$ that is a Suzuki $2$-group. Write $|Z(P)|=q=2^m$ and let $b$ be the Brauer correspondent block of $B$ in $N_G(P)$. 
\begin{enumerate}[(a)]
\item $k_h(B)=k_h(b)$ for each $h$.
\item If $P$ is not of type $\cA$, then $k_h(B)\neq 0$ precisely when $h=0$ or $h=m$.
\item Suppose $P$ is of type $\cA$, defined by a field automorphism $\theta$ of $\FF_q$. Let $n$ be the order $\theta$ and $r=m/n$.
\begin{enumerate}[(i)]
\item If $n$ is odd, then $k_h(B)\neq 0$ precisely when $h=0$ or $h=(m-r)/2$.
\item If $n=2$, then $k_h(B)\neq 0$ precisely when $h=0$ or $h=m/2$.
\item If $n>2$ is even, then $k_h(B)\neq 0$ precisely when $h=0$, $h=(m-2r)/2$ or $h=m/2$.
\end{enumerate}
\end{enumerate}
\end{proposition}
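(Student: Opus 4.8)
\textbf{Proof proposal for \thref{number_irred}.}

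The plan is to reduce everything to an explicit computation for blocks with normal defect group, and then to read off the heights of irreducible characters directly from known character-degree information for Suzuki $2$-groups. Part (a) is immediate: by \thref{main_theorem} (or already \thref{reduced_block}) the block $B$ is basic, hence in particular ordinary, Morita equivalent to its Brauer correspondent $b$ via bimodules of the appropriate form, and ordinary Morita equivalence preserves $k_h$ for every $h$ (it matches up $\Irr(B)$ with $\Irr(b)$ and multiplies degrees by a fixed rational whose $p$-part is $[G:N_G(P)]_p / 1$, adjusted so that heights are preserved — more precisely, for principal-block-type normal defect situations one uses that basic Morita equivalence preserves heights, as recorded in the references to \cite{lin2}). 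So from now on I may assume $P \lhd G$, and by the description of \thref{main_theorem}(i) together with \thref{reduced_block} I may take $G = P \rtimes \hat E$ with $\hat E$ a central extension of an odd-order $E \leq \Aut(P)$ by a central $2'$-subgroup $Z$ acting trivially on $P$; the central $Z$ contributes only a twist by a linear character and does not affect heights, so effectively $G = P \rtimes E$ and $B = \cO G$.

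Next I would compute $\Irr(B)$ for $G = P \rtimes E$ using Clifford theory over the normal subgroup $P$. Every $\chi \in \Irr(G)$ lies over some $\lambda \in \Irr(P)$, and since $[G:P] = |E|$ is odd while $P$ is a $2$-group, by a Glauberman/Isaacs-type argument (or just because $\gcd(|E|,|P|)=1$) each $E$-orbit on $\Irr(P)$ has a canonical extension, and $\chi(1) = [E : E_\lambda]\,\lambda(1)\,\psi(1)$ for $\psi \in \Irr(E_\lambda)$; the height of $\chi$ is then governed by the $2$-part of $\lambda(1)$ alone, since $[E:E_\lambda]$ and $\psi(1)$ are odd. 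Thus the set of heights occurring in $B$ is exactly $\{\, v_2(\lambda(1)) : \lambda \in \Irr(P)\,\}$, i.e. it is determined by the character degrees of $P$ itself. So the proposition reduces to the purely group-theoretic statement about which powers of $2$ occur as degrees of irreducible characters of a Suzuki $2$-group of the given type. For (b), $P$ of type $\cB$, $\cC$ or $\cD$ is semi-extraspecial (this is exactly the fact invoked in the proof of \thref{subgroup_structure}(b)): its nonlinear irreducible characters all have degree $|P/Z(P)|^{1/2} = \sqrt{[P:Z(P)]}$, and $[P:Z(P)] = q^2 = 2^{2m}$, so the only degrees are $1$ and $2^m$, giving heights $0$ and $m$ as claimed. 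For (a)'s type-$\cA$ case one uses the known character theory of $\cA(m,\theta)$: here $[P:Z(P)] = q = 2^m$ and $Z(P) = \Phi(P) = [P,P]$, and the nonlinear irreducibles are induced from/parametrized by the nontrivial linear characters $\mu$ of $Z(P)$, with degree equal to $\sqrt{[P : Z_\mu]}$ where $Z_\mu$ is the radical of the alternating form $(x,y) \mapsto \mu([x,y])$ on $P/Z(P)$; this radical has index $2^{m-r}$, $2^m$, or a degree depending on $\mu$ according to the order of $\theta$, and the resulting list of $2$-adic valuations $(m-r)/2$, $m/2$, $(m-2r)/2$ matches the three cases (i), (ii), (iii). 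This is precisely the character-degree data for Suzuki $2$-groups of type $\cA$ worked out in the literature (the relevant source should be cited here), split according to whether $\theta$ has odd order, order $2$, or larger even order.

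The main obstacle is the type-$\cA$ case (c): I need the exact structure of the commutator form $\mu([x,y])$ on $P/Z(P)$ as $\mu$ ranges over $\Irr(Z(P))\setminus\{1\}$ and to see that its radical takes exactly the claimed indices, with the trichotomy governed by $\mathrm{ord}(\theta)$ being odd, $2$, or even-and-$>2$. Concretely, from the multiplication law $(\alpha_1,\beta_1)(\alpha_2,\beta_2) = (\alpha_1+\alpha_2,\ \beta_1+\beta_2+\alpha_1\theta(\alpha_2))$ one computes $[(\alpha_1,\ast),(\alpha_2,\ast)] = (0,\ \alpha_1\theta(\alpha_2)+\alpha_2\theta(\alpha_1))$, so for $\mu$ corresponding to $c\in\FF_q^\times$ via the trace pairing the form is $(\alpha_1,\alpha_2)\mapsto \mathrm{Tr}\big(c(\alpha_1\theta(\alpha_2)+\alpha_2\theta(\alpha_1))\big)$, and its radical is $\{\alpha : \mathrm{Tr}(c\alpha\theta(x)) = \mathrm{Tr}(c\theta(\alpha)x) \text{ for all }x\} = \ker(x\mapsto c\theta(x)\,+\,\theta^{-1}(cx))$ type linear map over $\FF_2$ — and the $\FF_2$-dimension of this kernel depends on $\mathrm{ord}(\theta)$ in exactly the way that produces the three cases. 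I would either carry out this linear-algebra computation over $\FF_q$ directly (tracking how $\theta^{-1}(c/\overline{\cdot})$ interacts with the fixed field of $\theta$ and of $\theta^2$), or, preferably, cite the existing determination of character degrees of type-$\cA$ Suzuki groups and simply match cases; the honest statement of the proof should point to that reference for the degree data and reserve the new content for the Morita/Clifford reduction of parts (a) and (b).
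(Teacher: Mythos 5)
There is a genuine gap, and it sits at the very first step. You claim that \thref{main_theorem} (or \thref{reduced_block}) gives a Morita equivalence between $B$ and its Brauer correspondent $b$ in $N_G(P)$, and on that basis you reduce at once to the case $P \lhd G$. Neither result says this. The main theorem says $B$ is basic Morita equivalent to \emph{one of the listed blocks}, and these include the principal blocks of groups $H$ with ${}^2B_2(2^{2n+1}) \leq H \leq \Aut({}^2B_2(2^{2n+1}))$ and the maximal defect blocks in the $PSU_3(2^n)$ case; in those cases the defect group is not normal, and the block is \emph{not} Morita equivalent to its Brauer correspondent (the $P \rtimes E$ blocks and the blocks of the quasisimple-type groups are distinct Morita classes -- this is precisely why they appear as separate items in the classification, e.g.\ $B_0(\cO\,{}^2B_2(8))$ versus $\cO(P \rtimes C_7)$ in the order $64$ corollary). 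So part (a), which is exactly the statement $k_h(B)=k_h(b)$, is nontrivial for these cases and cannot be obtained by pretending $B\simeq b$; your argument only ever treats the normal defect situation, where (a) is essentially vacuous. The paper closes this gap differently: it uses the main theorem (together with the fact that the equivalence preserves the Frobenius category and K\"ulshammer--Puig class, so that $b$ is matched with the Brauer correspondent of the listed block) to reduce to the listed blocks, and then invokes~\cite{ea01}, i.e.\ results on Dade/Robinson-type conjectures for groups of $2$-local rank one (the listed blocks have trivial intersection defect groups), to get $k_h = k_h$ of the correspondent there. Some substitute of this kind -- a height-counting result for the TI blocks of the Suzuki and unitary groups -- is indispensable, and your proposal contains none.

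Once (a) is in place, your treatment of (b) and (c) is essentially the paper's: apply the normal-defect-group case to $b$, where Clifford theory over $P$ with the odd group $E$ (and the central $2'$-twist) shows the heights are exactly the $2$-adic valuations of the degrees in $\Irr(P)$, and then quote the character degrees of Suzuki $2$-groups -- semi-extraspecial/ultraspecial structure for types $\cB$, $\cC$, $\cD$ (degrees $1$ and $2^m$, as in~\cite{fm01}), and Sagirov's determination for type $\cA$ (as in~\cite{sa99},~\cite{bj08}). Your sketch of the commutator-form computation for type $\cA$ is plausible but is explicitly left unfinished; as you say yourself, citing the existing degree computation is the right move, and that is what the paper does. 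So the normal-defect half of your argument is sound, but the proposal as written does not prove part (a), and hence does not prove (b) and (c) either, for blocks in the ${}^2B_2$ and $PSU_3$ families.
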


\begin{proof}
By Theorem \ref{main_theorem} $B$ is Morita equivalent to one of the blocks listed, with the same inertial quotient and K\"ulshammer-Puig class. The same is true of the Brauer correspondent block $b$ of $N_G(P)$. Therefore it suffices to check the result for the listed blocks. Part (a) then follows from~\cite{ea01}.

The conclusion of (b) holds for blocks with normal defect group by~\cite{fm01}, and (c) for blocks with normal defect group by~\cite{bj08} (noting that the result was first proved in~\cite{sa99}). Parts (b) and (c) then follow by (a).
\end{proof}

Brauer's $k(B)$ conjecture states that $k(B) \leq |P|$. Since it is known for $p$-solvable groups, and hence for blocks with normal defect groups (see~\cite{gl84} for the case $p=2$), a consequence of \thref{number_irred} is that the $k(B)$ conjecture holds for blocks whose defect groups are Suzuki $2$-groups. Another consequence of \thref{number_irred} is that the conjecture in~\cite{em14} also holds for blocks with these defect groups.

\begin{center} {\bf Acknowledgments}
\end{center}
\smallskip

We thank Alexander Moret\'o for drawing our attention to~\cite{be77} and~\cite{fm01}, and Shigeo Koshitani for some insightful questions. We also thank the referee for their careful reading of the manuscript, which greatly helped the presentation.


\begin{thebibliography}{99}

\bibitem{alp86} J.~L.~Alperin, \emph{Local representation theory}, Cambridge University Press (1986).

\bibitem{an20} J.~An, \emph{Controlled $2$-blocks of quasisimple groups}, J. Algebra {\bf556} (2020), 581--620.

\bibitem{ae23} J.~An and C.~W.~Eaton, \emph{Morita equivalence classes of blocks with extraspecial defect groups $p_+^{1+2}$}, arxiv:2310.02150

\bibitem{ar21} C.~G.~Ardito, \emph{Morita equivalence classes of blocks with elementary abelian defect groups of order $32$}, J. Algebra {\bf573} (2021), 297-335.

\bibitem{asc00} M.~Aschbacher, \emph{Finite group theory}, Cambridge Studies in Advanced Mathematics {\bf10}, Cambridge university Press (1986).

\bibitem{ako} M.~Aschbacher, R.~Kessar and R.~Oliver, \emph{Fusion systems in algebra and topolgy}, London Mathematical Society Lecture Note Series \textbf{391}, Cambridge University Press (2011).

\bibitem{be77} B.~Beisiegel, \emph{Semi-extraspezielle $p$-Gruppen}, Math. Z. {\bf156} (1977), 247--254. 

\bibitem{bj08} Y.~Berkovich and Z.~Janko, \emph{Groups of prime power order. Vol. 2}, Walter de Gruyter (2008).

\bibitem{gap_SG} H.~U.~Besche, B.~Eick, E.~O'Brien and M.~Horn, \emph{SmallGrp - a GAP package}, Version 1.5.1 (2022), https://www.gap-system.org/Packages/smallgrp.html.

\bibitem{bl85} H.~I.~Blau, \emph{On trivial intersection of cyclic Sylow subgroups}, Proc. AMS {\bf94} (1985), 572--576.

\bibitem{br81} E.~G.~Bryukhanova, \emph{Automorphism groups of $2$-automorphic $2$-groups}, Algebra i Logika {\bf20} (1981), 5--21.

\bibitem{atlas} J.~H.~Conway, R.~T.~Curtis, S.~P.~Norton, R.~A.~Parker and R.~A.~Wilson, \emph{Atlas of Finite Groups}, Clarendon Press, Oxford (1985).

\bibitem{craven} D.~A.~Craven, \emph{The theory of fusion systems}, Cambridge Studies in Advanced Mathematics {\bf131}, Cambridge university Press (2011).

\bibitem{cg12} D.~A.~Craven and A.~Glesser, \emph{Fusion systems on small $p$-groups}, Trans. Amer. Math. Soc. {\bf364} (2012), 5945--5967.

\bibitem{ea01} C.~W.~Eaton, \emph{On finite groups of $p$-local rank one and conjectures of Dade and Robinson}, J. Algebra {\bf 238} (2001), 623--642.

\bibitem{wiki} C.~W.~Eaton, et al., Block library, https://wiki.manchester.ac.uk/blocks/

\bibitem{eel20} C.~W.~Eaton, F.~Eisele and M.~Livesey, \emph{Donovan's conjecture, blocks with abelian defect groups and discrete valuation rings}, Math. Z. {\bf295} (2020), 249--264.

\bibitem{el23} C.~W.~Eaton and M.~Livesey, \emph{Morita equivalence classes of $2$-blocks with abelian defect groups of rank $4$}, J. LMS {\bf109} (2024), Paper No. e12943

\bibitem{em14} C.~W.~Eaton and A.~Moret\'o, \emph{Extending Brauer's height zero conjecture to blocks with nonabelian defect groups}, Int. Math. Res. Not. {\bf20} (2014), 5581--5601.

\bibitem{fm01} G.~A.~Fern\'endez-Alcober and A.~Moret\'o, \emph{Groups with two extreme character degrees and their normal subgroups}, Trans. AMS {\bf353} (2001), 2171--2192.


\bibitem{gl84} D.~Gluck, \emph{On the $k(GV)$-problem}, J. Algebra {\bf89} (1984), 46--55.

\bibitem{gor} D.~Gorenstein, \emph{Finite groups}, Chelsea (1980).

\bibitem{gls} D.~Gorenstein, R.~Lyons and R.~Solomon, \emph{The classification of the finite simple groups, number 3}, Mathematical Surveys and Monographs Vol. 40, Number 3, American Mathematical Society (1994).

\bibitem{hi63} G.~Higman, \emph{Suzuki $2$-groups}, Illinois J. Math. {\bf7} (1963), 79–-96.

\bibitem{hu67} B.~Huppert, \emph{Endliche Gruppen I}, Springer (1967).

\bibitem{ka80} W.~M.~Kantor, \emph{Linear groups containing a singer cycle}, J. Algebra,
{\bf62} (1980), 232–-234.

\bibitem{ku85} B.~K\"ulshammer, \emph{Crossed products and blocks with normal defect groups}, Comm. Alg. {\bf13} (1985), 147--168.

\bibitem{ku87} B.~K\"ulshammer, \emph{A remark on conjectures in modular representation theory}, Arch. Math. {\bf49} (1987), 396-399.

\bibitem{kp90} B.~K\"ulshammer and L.~Puig, \emph{Extensions of nilpotent blocks}, Invent. Math. \textbf{102} (1990), 17--71.

\bibitem{lmm15} T.~Le, K.~Magaard and J.~Moori, \emph{On the character degree of Suzuki $p$-groups of type A and C}, Comm. Algebra {\bf43} (2015), 4148--4158.

\bibitem{lin1} M.~Linckelmann, \emph{The block theory of finite group algebras, Volume I}, London Math. Soc. Student Texts 91, Cambridge University Press (2018).

\bibitem{lin2} M.~Linckelmann, \emph{The block theory of finite group algebras, Volume II}, London Math. Soc. Student Texts 92, Cambridge University Press (2018).

\bibitem{nt} H.~Nagao and Y.~Tsushima, \emph{Representations of finite groups}, Academic Press (1987).

\bibitem{pu99} L.~Puig, \emph{On the local structure of Morita and Rickard Equivalences between Brauer blocks}, Progr. Math., {\bf178}, Birkh\"auser Verlag, Basel, 1999.

\bibitem{sa99} I.~A.~Sagirov, \emph{Degrees of irreducible characters of Suzuki $2$-groups (Russian)}, Mat. Zemetki {\bf66} (1999), 258--263.

\bibitem{st06} R.~Stancu, \emph{Control of fusion in fusion systems}, J. Algebra Appl. {\bf5} (2006), 817--837.

\bibitem{su64} M.~Suzuki, \emph{Finite groups of even order in which Sylow 2-groups are independent}, Ann. of Math. {\bf80} (1964), 58–-77.

\bibitem{wa94} A.~Watanabe, \emph{On nilpotent blocks of finite groups}, J. Algebra \textbf{163} (1994), 128--134.
\end{thebibliography}
\end{document}